
\documentclass[12pt,dvips]{amsart}
\usepackage{amsfonts, amssymb, latexsym, epsfig}
\usepackage{euler, amsfonts, amssymb, latexsym, epsfig, epic}

\setlength{\oddsidemargin}{0in}
\setlength{\evensidemargin}{0in}
\setlength{\marginparwidth}{0in}
\setlength{\marginparsep}{0in}
\setlength{\marginparpush}{0in}
\setlength{\topmargin}{0in}
\setlength{\headheight}{0pt}
\setlength{\headsep}{0pt}
\setlength{\footskip}{.3in}
\setlength{\textheight}{9.2in}
\setlength{\textwidth}{6.5in}
\setlength{\parskip}{4pt}

\theoremstyle{plain}
  \newtheorem{theorem}{Theorem}[section]
  \newtheorem{proposition}[theorem]{Proposition}
  \newtheorem{lemma}[theorem]{Lemma}
  \newtheorem{corollary}[theorem]{Corollary}
  \newtheorem{conjecture}[theorem]{Conjecture}
\theoremstyle{definition}
  \newtheorem{definition}[theorem]{Definition}
  \newtheorem{example}[theorem]{Example}
  
  \newtheorem{question}[theorem]{Question}
 \theoremstyle{remark}

\numberwithin{equation}{section}

\def\ZZ{{\mathbb Z}}
\def\QQ{{\mathbb Q}}
\def\RR{{\mathbb R}}

\def\CC{{\mathbb C}}
\def\Proj{{\mathbb P}}

\def\Ess{{\mathcal{E}}}

\def\schubp{\mathfrak{S}}
\def\hh{\mathfrak{h}}
\def\bb{\mathfrak{b}}
\def\gg{\mathfrak{g}}

\def\Sym{\mathrm{Sym}}
\def\Des{\mathrm{Des}}

\def\xx{\mathbf{x}}

\newcommand{\qbin}[2]{\left[ \begin{matrix} #1 \\ #2 \end{matrix} \right]_q}


\begin{document}
\pagestyle{plain}
\title[Presenting Schubert variety cohomology]{Presenting the cohomology of a Schubert variety}

\author{Victor Reiner}
\address{School of Mathematics\\
University of  Minnesota\\
Minneapolis, MN 55455}
\email{reiner@math.umn.edu}

\author{Alexander Woo}
\address{Dept. of Mathematics, Statistics, and Computer Science\\
Saint Olaf College\\
Northfield, MN 55057 }
\email{woo@stolaf.edu}

\author{Alexander Yong}
\address{Dept. of Mathematics\\
University of Illinois at Urbana-Champaign\\
Urbana, IL 61801}
\email{ayong@illinois.edu}

\subjclass[2000]{14M15, 14N15}

\keywords{Schubert calculus, Schubert variety, cohomology presentation, bigrassmannian, essential set}

\date{\today}

\begin{abstract}
We extend the short presentation due to [Borel '53] of the cohomology
ring of a generalized flag manifold to a relatively short presentation
of the cohomology of any of its Schubert varieties. Our result is
stated in a root-system uniform manner by introducing the {\it
essential set of a Coxeter group element}, generalizing and giving a
new characterization of [Fulton '92]'s definition for permutations.
Further refinements are obtained in type $A$.
\end{abstract}

\maketitle

\tableofcontents

\newpage 
\section{Introduction}
\label{intro-section}


The cohomology of the generalized flag manifold $G/B$, for any complex
semisimple algebraic group $G$ and Borel subgroup $B$, has a classical
presentation due to Borel \cite{Borel}.  Pick a maximal torus
$T\subset B\subset G$, choose a field $\Bbbk$ of characteristic zero, and let
$V=\Bbbk\otimes_{\ZZ} X(T)$, where $X(T)$ is the coweight lattice of
$T$.  Let $\Bbbk[V]:=\Sym(V^{\star})$ be the symmetric algebra for
$V^{\star}$; it naturally carries an action of the Weyl group $W :=
N_G(T)/T$.  Furthermore let $\Bbbk[V]^W$ be the ring of $W$-invariants
of $\Bbbk[V]$, and $\Bbbk[V]^W_+$ the ideal of $\Bbbk[V]$ generated by
the $W$-invariants of positive degree.  A classical theorem of Borel
\cite{Borel} states that
\begin{equation}
\label{Borel-presentation}
H^*(G/B,\Bbbk) \cong \Bbbk[V]/(\Bbbk[V]^W_+).
\end{equation}

A desirable feature of Borel's presentation is its shortness: since $W$
acts on $V^\star$ as a finite reflection group, by Chevalley's Theorem,
the $W$-invariants $\Bbbk[V]^W$ are a polynomial algebra
$\Bbbk[f_1,\ldots,f_n]$, where $n:=\dim_\Bbbk V$ is the rank of
$G$ \cite[Section~3.5]{humphreys}.  Hence Borel's presentation shows
that $H^\star(G/B)$ is a complete intersection as it has $n$ generators and
$n$ relations.

 There is a second way to describe $H^{\star}(G/B)$ arising from
 a cell decomposition of $G/B$. One has
$$
\begin{array}{rll}
G&=\bigsqcup_{w \in W} BwB & \text{(Bruhat decomposition)}\\
G/B&=\bigsqcup_{w \in W} X_w^\circ & \text{(Schubert cell decomposition), with}\\ 
X_w^\circ&:=BwB/B \cong \CC^{\ell(w)} &\text{ (open Schubert cell)}.  
\end{array}
$$ Here $\ell(w)$ denotes the Coxeter group length of $w$ under the
Coxeter system $(W,S)$, where $S$ are the reflections in the simple
roots $\Pi$ among the positive roots $\Phi^+$ 
within the natural root system $\Phi$ associated to our Lie pinning.  
This gives a
$CW$-decomposition for $G/B$ in which all cells have even real
dimension.  Hence the fundamental homology classes $[X_w]$ of their
closures $X_w:=\overline{X_w^\circ}$, the {\bf Schubert varieties},
form a $\ZZ$-basis for the integral homology $H_\star(G/B)$, and their
(Kronecker) duals $\sigma_w:=[X_w]^\star$ form the dual $\ZZ$-basis for
the integral cohomology $H^\star(G/B)$.

The above facts lead to an {\it a priori} presentation
for $H^\star(X_w)$; see also \cite[Cor. 4.4]{Carrell} and
\cite[Prop. 2.1]{GasharovReiner}:  The Schubert variety $X_w$ is the
union of the Schubert cells $X_u^\circ$ for which $u \leq w$ in the
{\it (strong) Bruhat order}; hence it inherits a cell decomposition
from the flag variety.  Consequently, the map on cohomology
$$
H^\star(G/B) \rightarrow H^\star(X_w)
$$
induced by including $X_w \hookrightarrow G/B$ is surjective 
with kernel 
\begin{equation}
\label{first-I-def}
I_w:=\Bbbk\text{-span of }\{ \sigma_u: u \not\leq w \}.
\end{equation}
This gives rise to presentations
\begin{equation}
\label{wasteful-presentation}
\begin{aligned}
H^\star(X_w) &\cong  H^\star(G/B)/I_w &\text{ working over }\ZZ\\ 
         &\cong  \Bbbk[V]/\left( I_w + \Bbbk[V]^W_+ \right)&
\text{ working over }\Bbbk.\\ 
\end{aligned}
\end{equation}

This presentation \eqref{wasteful-presentation} involves a generating set \eqref{first-I-def}
for $I_w$ with at most $|W|$ generators.  However, this generating set
for $I_w$ is wasteful in that it not only generates $I_w$ as an ideal but
actually spans it $\Bbbk$-\emph{linearly} within $H^\star(G/B)$. 

Therefore, a basic question is to request more efficient generating
sets for $I_w$.  For type $A_{n-1}$, earlier work
\cite{GasharovReiner} reduced the $|W|=n!$ upper bound on the number
of generators for $I_w$ to a polynomial bound of $n^2$ for the class
of Schubert varieties $X_w$ \emph{defined by inclusions}; this class
includes all smooth $X_w$. Moreover, for a certain subclass of smooth
Schubert varieties $X_w$ considered originally by Ding
\cite{Ding1,Ding2}, they gave a smaller generating set for these $I_w$
having only $n$ generators.  This latter result was applied in
\cite{DevelinMartinReiner} to classify these varieties up to
isomorphism and homeomorphism (the generating set \eqref{first-I-def}
having proved too unwieldy). One motivation for this work arises from
the desire to extend this classification to general Schubert varieties of
type $A$.  Experience suggests that presentations for $I_w$ that are
as simple as possible are best for this purpose.

The question of finding 
simple presentations of $I_w$ for other root systems appears to 
have been less studied.

The main goal of this paper is to give a concise and root-system
uniform extension of Borel's presentation that produces for arbitrary
$w\in W$ an abbreviated list of generators for $I_w$.  Our first main
result, Theorem~\ref{general-bound}, achieves this via a strong
restriction on the {\bf descent set}
$$
\Des(u):=\{ s \in S: \ell(us) < \ell(u) \}
$$ of the elements $u$ in $W$ that index elements in our list of ideal
generators $\sigma_u$ for $I_w$.  We will need the following
definitions, the first two of which are standard:
\begin{enumerate}
\item[$\bullet$]
An element $u\in W$ is {\bf grassmannian} if $|\Des(u)| \leq 1$.
\item[$\bullet$]
An element $v \in W$ is {\bf bigrassmannian} if both $v$ and $v^{-1}$ are grassmannian.  
\item[$\bullet$]
Given $w \in W$, the {\bf essential set for} $w$, denoted
$\Ess(w)$, is the set of $u\in W$ which are minimal in the Bruhat order among those
{\it not} below $w$.
\end{enumerate}
The nomenclature ``essential set'' for $\Ess(w)$ is justified in
Proposition~\ref{essential-proposition}, where we give a new
characterization of Fulton's essential set \cite{Fulton} for the case
of the symmetric group $W=S_n$.  Indeed, $\Ess(w)$ has been
previously studied from a different point of view: a result of Lascoux
and Sch\"utzenberger \cite[Th\'eor\`eme 3.6]{LascouxSchutzenberger},
implicit in
our first main result below, is that the elements in
$\Ess(w)$ are bigrassmannian for any $w$ in $W$.\footnote{ For Lascoux
and Sch\"utzenberger this arises in their work (see also Geck and Kim
\cite{GeckKim} and Reading \cite{Reading}) seeking efficient {\it
encodings} of the strong Bruhat order; it is perhaps not surprising that
it should arise in our search for efficient cohomology presentations
as well.}

\begin{theorem}
\label{general-bound}
For any $w\in W$, working in a field $\Bbbk$ of characteristic zero,
the ideal $I_w$ defining $H^\star(X_w)$ as a quotient of $H^\star(G/B)$ is
generated by the cohomology classes $\sigma_u$ where $u \not\leq w$
and $u$ is grassmannian.

More precisely, $I_w$ is generated by the classes $\sigma_u$ indexed
by those grassmannian $u$ for which there exist some bigrassmannian
$v$ in $\Ess(w)$ satisfying both $u \geq v$ and $\Des(u) = \Des(v)$.
\end{theorem}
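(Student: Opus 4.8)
The plan is to prove the stronger (second) assertion; the first then follows, since every bigrassmannian element---in particular every member of $\Ess(w)$---is grassmannian, so every index $u$ occurring in the second list is grassmannian. Call $\sigma_u$ a \emph{good generator} if $u$ is grassmannian and $u\geq v$, $\Des(u)=\Des(v)$ for some $v\in\Ess(w)$. Each good generator has $u\geq v\not\leq w$, hence lies in $I_w$; the content is the reverse inclusion, namely that every $\sigma_u$ with $u\not\leq w$ lies in the ideal $J$ generated by the good generators. Throughout I will use: that $I_w$ is the $\Bbbk$-span of $\{\sigma_u:u\not\leq w\}$ by \eqref{first-I-def}--\eqref{wasteful-presentation}; that $u\not\leq w$ exactly when $v\leq u$ for some $v\in\Ess(w)$ (pick a Bruhat-minimal element of $\{u'\leq u:u'\not\leq w\}$); that the elements of $\Ess(w)$ are bigrassmannian; and the standard fact that for $K\subseteq S$ the Bruhat projection $u\mapsto u^K\in W^K$, with $u=u^Ku_K$ and $\ell(u)=\ell(u^K)+\ell(u_K)$, is order-preserving and fixes each element of $W^K$.

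First I would fix $u\not\leq w$ and let $g$ be a \emph{longest} good-generator index with $g\leq u$; one exists because some $v\in\Ess(w)$ has $v\leq u$ and $v$ itself is a good-generator index. Since $g$ is grassmannian and $g\geq v\neq e$, its descent set is a singleton $\Des(g)=\{s\}$; put $J:=S\setminus\{s\}$. I claim $g=u^J$: from $g\in W^J$ we get $g=g^J\leq u^J$, while $u^J$ is grassmannian with $\Des(u^J)\subseteq\{s\}$ and $u^J\geq g\geq v$, so $u^J\neq e$, forcing $\Des(u^J)=\{s\}=\Des(v)$; thus $\sigma_{u^J}$ is a good generator with $g\leq u^J\leq u$, and maximality of $\ell(g)$ gives $g=u^J$. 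Now induct on the defect $d(u):=\ell(u)-\ell(g)=\ell(u_J)\geq 0$. If $d(u)=0$ then $u=g$ and $\sigma_u$ is itself a good generator (this case subsumes $u\in\Ess(w)$). If $d(u)\geq 1$, I would invoke a \emph{parabolic product formula}
\[
\sigma_g\cdot\sigma_{u_J}=\sigma_u+\sum_{z}c_z\,\sigma_z\qquad(c_z\in\Bbbk)
\]
in $H^\star(G/B)$, in which every index $z$ occurring satisfies $\ell(z)=\ell(u)$, $z\geq g$, and---crucially---$z^J>g$ strictly. Granting this: each such $z$ has $z\geq g\geq v$, hence $z\not\leq w$; and $z^J$ is then grassmannian with $\Des(z^J)=\{s\}=\Des(v)$ and $z^J\geq v$, so $z^J$ is a good-generator index $\leq z$ with $\ell(z^J)>\ell(g)$, whence the longest good index below $z$ has length $>\ell(g)$, so $d(z)<d(u)$. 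By the inductive hypothesis $\sigma_z\in J$; since also $\sigma_g\cdot\sigma_{u_J}\in J$, the displayed identity yields $\sigma_u\in J$. This completes the induction and the proof modulo the product formula.

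The remaining, and main, obstacle is the parabolic product formula. Here $\sigma_g=\pi^\star\bar\sigma_g$ for the projection $\pi\colon G/B\to G/P_J$, and any $z$ occurring in a product of Schubert classes $\sigma_g\cdot\sigma_{u_J}$ satisfies $z\geq g$ (support of a product of Schubert classes), hence $z^J\geq g^J=g$, and $\ell(z)=\ell(g)+\ell(u_J)=\ell(u)$ by degree. What needs proof is that $z=u$ is the \emph{only} index with $z^J=g$ and that it appears with coefficient $1$. I would derive this from the structure of $H^\star(G/B)$ as a free module over $\pi^\star H^\star(G/P_J)$ on the Schubert classes $\{\sigma_t:t\in W_J\}$ (Leray--Hirsch for the bundle $P_J/B\hookrightarrow G/B\to G/P_J$), together with the triangularity statement
\[
\sigma_z=\pi^\star(\bar\sigma_{z^J})\cdot\sigma_{z_J}+\textstyle\sum_{t\in W_J,\ \ell(t)<\ell(z_J)}\pi^\star(a_{z,t})\,\sigma_t,
\]
so that, comparing the top piece in the $W_J$-grading on both sides of $\sigma_g\cdot\sigma_{u_J}=\pi^\star(\bar\sigma_g)\cdot\sigma_{u_J}$, the only Schubert term with full $W_J$-degree $\ell(u_J)$ is $\sigma_{u^Ju_J}=\sigma_u$ with coefficient $1$, while every other $\sigma_z$ has $\ell(z_J)<\ell(u_J)$, i.e.\ $z^J>g$. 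Establishing this triangularity---equivalently, understanding the Schubert basis of $H^\star(G/B)$ relative to the $\pi^\star H^\star(G/P_J)$-module structure, for $P_J$ of corank one---is the technical heart; I expect the type-$A$ refinements promised in the abstract to furnish an explicit form of it there, and a root-system-uniform argument via BGG/Demazure operators (using that $\partial_s$ for $s\in J$ is $\pi^\star H^\star(G/P_J)$-linear, and that $\sigma_g$ is pulled back) to handle the general case.
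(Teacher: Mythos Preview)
Your outline is essentially the paper's proof. The paper likewise decomposes $I_w=\sum_{v\in\Ess(w)}J_v$, fixes $J=S\setminus\Des(v)$ for each bigrassmannian $v$, and shows (Theorem~\ref{general-J-generators}) that $J_v$ is generated by $\{S_u:u\in W^J,\ u\geq v\}$ via the same parabolic-factorization induction you run. Your ``parabolic product formula'' is exactly the paper's sparsity Lemma~\ref{sparsity-lemma}: for $g\in W^J$ and $x,x'\in W_J$ one has $c_{g,x}^{g\cdot x'}=\delta_{x,x'}$; combined with property~(g) (which gives $z\geq g$, hence $z^J\geq g$) this says every $z\neq u$ appearing in $\sigma_g\cdot\sigma_{u_J}$ has $z^J>g$ strictly.

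The step you flag as the technical heart and leave open is proved in the paper by precisely the mechanism you suggest. It is a short induction on $\ell(x')$: pick $s\in J$ with $x's<x'$, write $\partial_{g\cdot x'}=\partial_g\partial_{x's}\partial_s$, and apply the Leibniz rule to get $\partial_s(S_g S_x)=S_g\,\partial_s(S_x)$, using that $\partial_s(S_g)=0$ and $s(S_g)=S_g$ because $s\in J$ and $g\in W^J$. This is exactly your ``$\partial_s$ is $\pi^\star H^\star(G/P_J)$-linear'' observation, carried through. Your Leray--Hirsch/triangularity repackaging is correct and equivalent to the sparsity statement (your comparison of the top $W_J$-degree piece is exactly the computation $c_{g,x}^{g\cdot x'}=\delta_{x,x'}$), though the paper's direct structure-constant formulation is a bit leaner since it avoids invoking the module structure. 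Your induction on $d(u)=\ell(u_J)$ and the paper's induction on the colength of $v$ are different orderings of the same descent; both terminate for the same reason, namely that the residual terms have strictly larger $W^J$-component.
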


In type $A$, a similar result was obtained by Akyildiz, Lascoux, and
Pragacz \cite[Theorem~2.2]{ALP}. Specifically, they prove the first
sentence of Theorem~\ref{general-bound}, though they do not address the
strengthening given by the second sentence. Their methods are mainly
geometric, as opposed to our essentially combinatorial arguments. Their
work provides, to our knowledge, the first inroads towards an abbreviated
generating set for $I_w$.

Theorem~\ref{general-bound} replaces the general upper bound
of $|W|$ on the number of generators needed for $I_w$ with the bound 
\begin{equation}
\label{less-wasteful-general-bound}
\sum_{s \in S} [W:W_{S\setminus\{s\}}],
\end{equation}
where, for any subset $J\subset S$, $W_J$ denotes the parabolic
subgroup of $W$ generated by $J$.  Our theorem is deduced in
Section~\ref{Hiller-section} from a more general result
(Theorem~\ref{Hiller-BGG-general-bound}) that applies to Hiller's
extension \cite[Chapter IV]{Hiller} of Schubert calculus as introduced
by Bernstein, Gelfand, and Gelfand \cite{BGG} and
Demazure~\cite{Demazure} to the \emph{coinvariant algebras} of finite
reflection groups $W$.

Section~\ref{Hiller-section} explains and proves Theorem~\ref{general-bound}.
In Section~\ref{parabolic-section}, we exploit the particular form of our
generators to derive a straightforward extension to Schubert varieties
in any partial flag manifold $G/P$ associated to 
a parabolic subgroup $P$ of $G$.

Section~\ref{type-A-section} examines
more closely Theorem~\ref{general-bound} in 
type $A_{n-1}$.  Here one can take $G=GL_n(\CC)$,
with $B$ the subgroup of invertible upper triangular matrices, 
$T$ the invertible 
diagonal matrices, and $W=S_n$ the permutation matrices. The bound 
on generators
for $I_w$ in \eqref{less-wasteful-general-bound} becomes $2^n$, 
at least a practical improvement on $|W|=n!$.
%
%
More importantly,
one can be even more explicit and efficient in the
generating sets for $I_w$. 

Identify points of 
$G/B$ with complete flags 
$$
\langle 0 \rangle \subset V_1 \subset \cdots \subset V_{n-1} \subset \CC^n.
$$ 
Under this identification, each Schubert variety $X_w$ is the set of
flags satisfying certain specific conditions derived from $w$ of the
form $\dim_\CC (V_r \cap \CC^s)\geq t$.  The bigrassmannians $v$
comprising $\Ess(w)$ correspond to Fulton's {\it essential Schubert
  conditions}, a minimal list of such conditions defining the Schubert
variety $X_w$.  Our second main result (Theorem~\ref{J-generators})
provides for each bigrassmannian $v$ in $W=S_n$ a generating set for the ideal
\begin{equation}
\label{eqn:J_vgenset}
J_v:=\Bbbk\text{-span of }\{ \sigma_u: u \geq v \}
\end{equation}
in type $A$ that is smaller than the one used as a general
step (Theorem~\ref{general-J-generators}) in the proof of Theorem~\ref{general-bound}
for arbitrary finite Coxeter groups $W$.  Our proof of 
Theorem~\ref{J-generators} is based on symmetric function identities that we devise for this
purpose.

Therefore, concatenating these generating sets for $J_v$ gives a generating set for
\begin{equation}
\label{eqn:I_wconcat}
I_w = \sum_{v \in \Ess(w)} J_v
\end{equation}
that is smaller than the one provided by Theorem~\ref{general-bound}.
We remark that this result subsumes (and slightly improves upon;
see Example~\ref{GR-comparison}) 
the generating set of size $n^2$ given by \cite{GasharovReiner} 
in the case of Schubert varieties defined by inclusions.

Actually, we conjecture that this smaller generating set for $J_v$ in
type $A$ is minimal (although the generating set (\ref{eqn:I_wconcat})
for $I_w$ obtained by concatenation is not always minimal; see
Example~\ref{non-minimal-example}.)  The significance of this
minimality conjecture, as explained in Section~\ref{minconj}, is that
it implies an exponential \emph{lower} bound of at least
\[\binom{n/2}{n/4} \sim \frac{{\sqrt 2}^{n+2}}{\sqrt{\pi n}}\]
on the number of generators needed for $I_w$, accompanying our
exponential upper bound of $2^n$.  Thus one would not be able to expect
short presentations for $H^*(X_w)$ in general, at least in type~$A$.

\section{Proof of Theorem~\ref{general-bound}}
\label{Hiller-section}

As in Section~1, let $\Bbbk$ be a field of characteristic zero.
For $v,w$ in $W$, define two $\Bbbk$-linear subspaces $J_v, I_w$ of the cohomology
$H^\star(G/B)$ with $\Bbbk$ coefficients:
\begin{equation}
\label{I-J-definitions}
\begin{aligned}
J_v&:=\Bbbk\text{-span of }\{ \sigma_u: u \geq v \}\\
I_w&:=\Bbbk\text{-span of }\{ \sigma_u: u \not\leq w\} 
\left(=\sum_{v \in \Ess(w)} J_v\right).
\end{aligned}
\end{equation}
Recall the essential set $\Ess(w)$ is the 
set of all Bruhat-minimal elements of $\{v \in W:  v \not\leq w\}$.

The Schubert cell decomposition of $G/B$ shows that both of these 
$\Bbbk$-subspaces
are actually ideals in the cohomology ring $H^\star(G/B)$:  one has that
$I_w$ (respectively $J_v$) is
the kernel of the surjection $H^\star(G/B) \rightarrow H^\star(X)$ 
induced by the inclusion of the
$B$-stable subvariety $X \subseteq G/B$, where $X=X_w$ (respectively
$X= \bigcup_{u \not\geq v} X_u$).

Also recall from Section~1 the following important property of
bigrassmannians in the Bruhat order on Coxeter groups, originally due
to Lascoux and Sch\"utzenberger \cite[Th\'eor\`eme
3.6]{LascouxSchutzenberger} and Geck and Kim \cite[Lemma 2.3 and
Theorem 2.5]{GeckKim}.

\begin{lemma}
\label{bigrassmannian-fact}
For any Coxeter system $(W,S)$ and $w$ in $W$, every element of
$\Ess(w)$ is bigrassmannian.
\end{lemma}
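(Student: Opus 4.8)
The plan is to show that if $v \in \Ess(w)$ then $v$ is grassmannian (i.e.\ $|\Des(v)| \le 1$); applying the same argument to $v^{-1}$ (noting that $v \in \Ess(w)$ if and only if $v^{-1} \in \Ess(w^{-1})$, since the Bruhat order is preserved under inversion) will then give bigrassmannianity. First I would recall the basic characterization of the Bruhat order in terms of descents: if $\ell(vs) < \ell(v)$ for a simple reflection $s$, then $vs < v$, and more generally one has the lifting/$Z$-property relating $v$, $vs$ and elements below $w$ when $s$ is or is not a right descent of $w$. The key point to exploit is the minimality of $v$ among elements \emph{not} below $w$: every proper Bruhat-lower element of $v$ \emph{is} below $w$, while $v$ itself is not.

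The heart of the argument is the following dichotomy, applied to any right descent $s \in \Des(v)$. Consider the parabolic coset $\{v, vs\}$. Since $s \in \Des(v)$ we have $vs < v$, so by minimality of $v$ we get $vs \le w$. Now use the standard Bruhat-order lemma (the ``lifting property''): for $u \le w$ and a simple reflection $s$, if $\ell(ws) < \ell(w)$ then $us \le w$; and if additionally $\ell(us) > \ell(u)$ then $u \le ws$. Apply this with $u = vs$: if $s \in \Des(w)$ as well, then from $vs \le w$ and $\ell((vs)s) = \ell(v) > \ell(vs)$ we would conclude $v = (vs)s \le w$, contradicting $v \not\le w$. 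Hence $s \notin \Des(w)$. So \emph{every} right descent of $v$ fails to be a right descent of $w$. Now I would bring in the hypothesis toward a contradiction that $v$ has two distinct right descents $s \ne t$. Then both $s,t \notin \Des(w)$, and $vs, vt < v$, so $vs \le w$ and $vt \le w$. The goal is to derive $v \le w$ from this, contradicting minimality.

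To close the gap I would work in the parabolic quotient $W^{\langle s,t\rangle}$ or, more elementarily, directly with the subword characterization of Bruhat order on a fixed reduced word for $v$: choose a reduced word $\mathsf{s}_{i_1}\cdots \mathsf{s}_{i_\ell}$ for $v$ ending in $s$, and another ending in $t$; both $vs$ and $vt$ are obtained by deleting the last letter. Since $vs, vt \le w$, fix reduced words for $w$ containing these as subwords and analyze how $v$ sits above both. The cleanest route is the ``diamond'' argument: because $vs, vt < v$ and $s,t$ are distinct simple reflections, the element $\max(vs, vt)$ in the interval is either $v$ itself or lies strictly below $v$; in a Coxeter group the join of $vs$ and $vt$ inside the Bruhat order, when both are covered by $v$, is exactly $v$ (this is where I expect the main obstacle to lie — one needs the fact that $[vst\cdots, v]$ is a Boolean-type interval, or equivalently that $v$ is the unique minimal upper bound of $vs$ and $vt$; this follows from the exchange condition applied to a reduced word for $v$ whose last two letters are $st$ or $ts$, noting that $vs$ determines the letters up to position $\ell-1$ and $vt$ forces the final $st$-block). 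Once we know $v = vs \vee vt$ in Bruhat order and $vs \le w$, $vt \le w$, it remains to invoke that for bigrassmannian-free intervals — or simply by a second application of the lifting property, lifting $vs \le w$ past $s$ is blocked since $s \notin \Des(w)$, but lifting $vt \le w$ \emph{is} possible past $t$ to get $vt \cdot$(a reflection) — I would instead finish by the short computation $v \le w$: since $s \notin \Des(w)$, $\ell(ws) > \ell(w)$, and $vs \le w < ws$ together with $\ell(v) = \ell(vs)+1$ forces, by the lifting property in its ``$u \le w$, $\ell(us)>\ell(u)$, $\ell(ws)>\ell(w)$ $\Rightarrow$ $us \le ws$'' form applied cleverly with the roles of $s,t$, the conclusion $v \le w$. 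This contradiction shows $|\Des(v)| \le 1$, completing the proof; I would remark that this recovers the Lascoux–Sch\"utzenberger and Geck–Kim results cited above.
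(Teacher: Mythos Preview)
The paper does not give its own proof of this lemma; it simply records it as a known result and cites Lascoux--Sch\"utzenberger \cite{LascouxSchutzenberger} and Geck--Kim \cite{GeckKim}. So the comparison here is really an assessment of whether your argument stands on its own.

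Your setup is correct: the reduction to showing $|\Des(v)|\le 1$ via the Bruhat-order anti-automorphism $u\mapsto u^{-1}$ is valid, and your derivation that any $s\in\Des(v)$ must satisfy $s\notin\Des(w)$ is a clean and correct use of the lifting property. The gap is exactly where you flag it. Given two distinct right descents $s,t$ of $v$, you need to show that $vs\le w$ and $vt\le w$ together force $v\le w$. You phrase this as ``$v=vs\vee vt$ in Bruhat order,'' but this assertion \emph{is} the content of the lemma in disguise, and you do not prove it. The appeal to reduced words (``$vs$ determines the letters up to position $\ell-1$ and $vt$ forces the final $st$-block'') does not make sense as written: two different reduced words for $v$ ending in $s$ and in $t$ need not share a common prefix, and nothing forces the subword embeddings of $vs$ and $vt$ into a reduced word for $w$ to be compatible. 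Your fallback paragraph invoking the lifting property ``cleverly with the roles of $s,t$'' does not contain an actual argument; the standard lifting property, applied in the configurations you describe, only recovers what you already know (e.g.\ from $vs\le w$ and $s\notin\Des(w)$ one gets $v\le ws$, not $v\le w$).

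This missing step is genuinely the hard part. One way to complete it, in the spirit of Geck--Kim, is to work inside the dihedral parabolic $W_J$ for $J=\{s,t\}$: since $s,t\in\Des(v)$ the $W_J$-component of $v$ in the factorization $v = v^J\cdot v_J$ is the longest element $w_0(J)$, and since $s,t\notin\Des(w)$ the element $w$ lies in $W^J$; one then argues by an induction along the two chains in $W_J$ (or equivalently by iterating the lifting property alternately in $s$ and $t$) to push the inequality down from $v\le w\cdot w_0(J)$ to $v\le w$. Without some argument of this kind, the proof is incomplete.
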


\noindent
See Section~\ref{essential-subsection} below for a further
interpretation of $\Ess(w)$ when $W$ is a Weyl group of type
$A_{n-1}$.  As a consequence of Lemma~\ref{bigrassmannian-fact} and
(\ref{I-J-definitions}), finding generators of $J_v$ for
bigrassmannian $v$ automatically gives generators for the ideals $I_w$.

We will actually work at the level of generality of irreducible
Coxeter systems $(W,S)$ with $W$ finite, using Hiller's version
\cite{Hiller} of the Schubert calculus \cite{BGG, Demazure} for
coinvariant algebras. This emphasizes that the arguments of this
section and the next only depend on Coxeter combinatorics and formal
properties of divided difference operators and coinvariant
algebras.  We review here the relevant facts from \cite[Chapter
IV]{Hiller}.

Let $W$ be a finite and irreducible Coxeter group, and $V$ its
reflection representation.  One then picks a (possibly
non-crystallographic) root system $\Phi$ for $W$ as follows: $\Phi
\subset V^\star$ is any $W$-stable choice\footnote{Note that this may
require coefficients in a subfield $\Bbbk$ of $\RR$ strictly larger
than $\QQ$ when $W$ is not crystallographic.} of a set of a linear
functionals $\alpha$ and $-\alpha$ such that the perpendicular spaces
$\alpha^\perp$ in $V$ run through the reflecting hyperplanes of $W$.
These reflecting hyperplanes divide $V$ into chambers; we pick one
which we call the dominant chamber $\mathcal{C}$.  One declares the
positive roots $\Phi^+\subseteq\Phi$ to be the roots which are
positive on this chamber.  The Coxeter generators $S$ of $W$ are
declared to be the reflections across the hyperplanes which are facets of
$\mathcal{C}$.  Among the positive roots are the simple roots
$\Pi=\{\alpha_s\}_{s \in S}$, where $\alpha_s$ is the positive root
vanishing on the reflecting hyperplane of $s$.

  With these choices, one defines for each $w\in W$ the {\bf Hiller
Schubert polynomial}\footnote{``BGG/Demazure Schubert polynomial''
would be as fair, but for us the main point is to distinguish these
polynomials from the type $A$ Lascoux-Sch\"{u}tzenberger version
forthcoming in Section~4, which are not the same as elements of
$k[V]$, although equivalent mod $k[V]^W_+$.}
$S_w$ in $\Bbbk[V]=\Sym(V^\star)$ by
$$
S_w := \partial_{w^{-1}w_0} S_{w_0},
$$
where $w_0$ is the unique longest element in $W$,
for which one declares that
$$
S_{w_0} :=  \frac{1}{|W|}\prod_{\alpha \in \Phi^+} \alpha.
$$ 
The images of the polynomials $S_w$ as $w$ ranges over all of $W$ will
form a basis for the {\bf coinvariant algebra}
$\Bbbk[V]/(\Bbbk[V]^W_+)$.  Here the {\bf divided difference}
operators $\partial_u$ on $\Bbbk[V]$ are defined first for $s\in S$ by
$$
\partial_s (f):= \frac{f-s(f)}{\alpha_s}
$$
and then for any $u\in W$ of Coxeter length $\ell=\ell(u)$ by
$$
\partial_u:=\partial_{s_{i_1}} \cdots \partial_{s_{i_\ell}},
$$
where $u=s_{i_1} \cdots s_{i_\ell}$ is any choice of a (reduced)
decomposition expressing $u$ in terms of the generators $s\in S$.

The relation to a generalized flag manifold $G/B$ is that $G$, $B$,
and $T$ come equipped with a (crystallographic) root system and Weyl
group $W$. Let $\hh\subset \bb \subset \gg$ be the Lie algebras associated
to $T\subset B\subset G$.
Taking the reflection representation $V$ to be $V=\hh$ (or
more generally $V=\Bbbk \otimes_\ZZ X(T)$) and the root system $\Phi$
to be the set of weights of the adjoint representation of $\gg$ acting
on itself, with $\Phi^+$ the weights of the action of $\gg$ on
$\bb$.  In this case, it was proven in \cite{BGG,
Demazure} that the element $S_w\in\Bbbk[\hh]$ is a lift under the
surjection
$$
\Bbbk[\hh] \quad \rightarrow \quad \Bbbk[\hh]/(\Bbbk[\hh]^W_+) 
\quad \left( \cong H^\star(G/B) \right)
$$ of the Schubert cohomology class $\sigma_w$ in $H^{\ell(w)}(G/B)$
which is {\it Kronecker dual} to the fundamental homology class
$[X_w]$ of $X_w$ in $H_{\ell(w)}(G/B)$ and {\it Poincar\'e dual} to
the fundamental homology class $[X_{w_0w}]$ in
$H_{\ell(w_0)-\ell(w)}(G/B)$.

  We collect basic properties of the divided difference operators $\partial_s$ and 
Schubert polynomials $S_w$ that we need below.
\begin{enumerate}
\item[(a)] {\bf Leibniz rule}:  $\partial_s (fg) = \partial_s(f) \cdot g + s(f) \partial_s(g).$
\item[(b)] $\partial_s(f)=0$ if and only if $s(f)=f$.  In the case where $f=S_w$ for some $w$, then
$\partial_s(S_w)=0$ if and only if $\ell(ws) > \ell(w)$, or equivalently if and only if $s \not\in \Des(w)$.
\item[(c)] Consequently, $\partial_s$ preserves the ideal 
$I= \Bbbk[V]^W_+$ generated
by the $W$-invariants of positive degree in $\Bbbk[V]$.
\item[(d)] 
$$
\partial_u \partial_v = 
  \begin{cases} 
    \partial_{uv}& \text{ if }\ell(u)+\ell(v)=\ell(uv)\\
    0          & \text{ otherwise.}\\
  \end{cases}
$$
\item[(e)] If $\ell(w)=\ell(w')$ then $\partial_{w'} S_w =
\delta_{w,w'}$.  (By definition, $\delta_{a,b}=1$ if $a=b$ and
$\delta_{a,b}=0$ otherwise.)
\item[(f)] Consequently, the {\bf Schubert structure constants} $c_{u,v}^w$ 
which uniquely express
$$
S_u S_v = \sum_{\substack{w \in W:\\ \ell(w)=\ell(u)+\ell(v)}} 
   c_{u,v}^w  S_w \qquad \mod k[V]^W_+
$$
can be computed by the formula $c_{u,v}^w= \partial_w \left( S_u S_v \right)$.
\item[(g)] These structure constants also satisfy this sparsity rule:
$$
c_{u,v}^w=0 \text{  unless }w \geq u,v\text{ in Bruhat order.}
$$  
This is a consequence of the Pieri
formula \cite[\S IV.3]{Hiller} for multiplying the $S_u$ by any of the
degree one elements that generate $\Bbbk[V]/(\Bbbk[V]^W_+)$.
\end{enumerate}

The key to the proof of Theorem~\ref{general-bound} turns
out to be the following lemma (perhaps of independent interest)
about some further sparsity of the Schubert structure
constants $c_{u,v}^w$ appearing in property $(f)$ above.
Given $J \subset S$, recall 
that every $w$ in $W$ has a unique length-additive 
{\bf parabolic factorization} 
$$
w=u \cdot x
$$ 
where $x$ lies in the parabolic subgroup $W_J$ generated by $J$,
and $u$ lies in the set $W^J$ of minimum-length coset representatives
for $W/W_J$, characterized by the property that \linebreak
$\Des(u) \cap J = \varnothing$.

\begin{lemma}
\label{sparsity-lemma}
%
Let $J \subset S$.  Suppose $w,w'$ in $W$ lie in the same coset $wW_J=w'W_J$,
so that $w = u \cdot x$ and $w'=u \cdot x'$ for some $u$ in $W^J$ and $x, x'$ in $W_J$.  
Then
$
c_{u,x}^{w'}= \delta_{w',w} = \delta_{x',x}.
$
\end{lemma}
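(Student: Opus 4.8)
The plan is to compute the structure constant $c_{u,x}^{w'} = \partial_{w'}(S_u S_x)$ directly using the divided-difference properties (a)--(g), exploiting the parabolic factorization $w' = u \cdot x'$ to peel off the divided differences coming from the $W_J$-part and then invoking property (e) on the $W^J$-part.

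First I would recall that since $x' \in W_J$ and $u \in W^J$, the factorization $w' = u \cdot x'$ is length-additive, so by property (d) we have $\partial_{w'} = \partial_u \partial_{x'}$. Hence $c_{u,x}^{w'} = \partial_u \partial_{x'}(S_u S_x)$. The idea is to move $\partial_{x'}$ past $S_u$ using the Leibniz rule (a). Here the key point is that $S_u$ is "$W_J$-invariant enough'': because $u \in W^J$, we have $\Des(u) \cap J = \varnothing$, so by property (b), $\partial_s(S_u) = 0$ — equivalently $s(S_u) = S_u$ — for every $s \in J$. Applying the Leibniz rule repeatedly along a reduced word for $x'$ (all of whose letters lie in $J$), every term in which a $\partial_s$ hits $S_u$ vanishes, and every $s$ acting on $S_u$ leaves it fixed; therefore $\partial_{x'}(S_u S_x) = S_u \cdot \partial_{x'}(S_x)$. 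This reduces the computation to $c_{u,x}^{w'} = \partial_u\bigl(S_u \cdot \partial_{x'}(S_x)\bigr)$.

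Next, I would analyze $\partial_{x'}(S_x)$ for $x, x' \in W_J$. If $x' \ne x$ with $\ell(x') = \ell(x)$, property (e) (applied within the subsystem, or directly in $W$ since lengths agree) gives $\partial_{x'}(S_x) = \delta_{x,x'} = 0$, hence $c_{u,x}^{w'} = 0 = \delta_{w',w}$ as desired. If $x' = x$, then $\partial_{x'}(S_x) = 1$, so $c_{u,x}^{w'} = \partial_u(S_u) = 1$ by property (e) again (taking $w' = u$, so $\ell(w') = \ell(u)$ forces $\partial_u S_u = \delta_{u,u} = 1$); and indeed $\delta_{w',w} = 1$ since $w' = u\cdot x = w$. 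One should also dispatch the degree-mismatch case: the structure constant $c_{u,x}^{w'}$ is only nonzero when $\ell(w') = \ell(u) + \ell(x)$, and since $\ell(w') = \ell(u) + \ell(x')$ by length-additivity of the parabolic factorization, this forces $\ell(x') = \ell(x)$, so we are always in one of the two cases just handled. Either way $c_{u,x}^{w'} = \delta_{x',x} = \delta_{w',w}$, the last equality because $w \mapsto x$ is a bijection on the coset $uW_J$.

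The main obstacle I anticipate is justifying the interchange $\partial_{x'}(S_u S_x) = S_u \cdot \partial_{x'}(S_x)$ cleanly when $\ell(x') > 1$: one needs that applying a word of $\partial_s$'s ($s \in J$) and the intervening $W$-actions to the product never disturbs the $S_u$ factor. This follows from an easy induction on $\ell(x')$ using Leibniz rule (a) together with the two facts $s(S_u) = S_u$ and $\partial_s(S_u) = 0$ for $s \in J$ — but it should be spelled out, since $\partial_{x'}$ is a composite and the Leibniz rule only directly handles a single $\partial_s$. A secondary subtlety is making sure property (e) applies verbatim: it is stated for $\partial_{w'} S_w$ with $\ell(w) = \ell(w')$ in the full group $W$, and we use it with $w' \leftarrow u, w \leftarrow u$ and with $w' \leftarrow x', w \leftarrow x$; the latter requires knowing that the Hiller Schubert polynomials for $W_J$-elements behave compatibly, but since property (e) is stated for arbitrary elements of $W$ of equal length, including those lying in $W_J$, no separate argument is needed.
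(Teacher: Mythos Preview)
Your proposal is correct and uses the same ingredients as the paper: property (d) to write $\partial_{w'}=\partial_u\partial_{x'}$, the Leibniz rule (a) together with (b) to pass $\partial_{x'}$ past $S_u$ (since $\Des(u)\cap J=\varnothing$), and property (e) to finish. The only difference is organizational: you first prove once and for all (by induction on $\ell(x')$) that $\partial_{x'}(S_u S_x)=S_u\,\partial_{x'}(S_x)$ and then apply (e) twice, whereas the paper runs a single induction on $\ell(x')$ that peels off one simple reflection at a time and branches on whether $\ell(xs)>\ell(x)$ or $\ell(xs)<\ell(x)$; your packaging is arguably a bit cleaner but the underlying argument is the same.
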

\begin{proof}
Using property (f) of divided differences above, one can rephrase
the lemma as saying that, for any $w'\in W$ with $w'=u \cdot x'$ where
$x' \in W_J$ and $\ell(x')=\ell(x)$ (so that $\ell(w')=\ell(w)$), one
has
\begin{equation}
\label{rephrased-sparsity}
\partial_{w'} (S_u S_x) = \delta_{x,x'}.
\end{equation}

We prove \eqref{rephrased-sparsity} by induction on $\ell(x')$.  In the base case
where $\ell(x')=0$, one has $x'=x=1$ and $w'=w=u$, so the assertion
\eqref{rephrased-sparsity} follows from property (e) above.

In the inductive step, let $\ell(x')>0$. Thus, there exists $s\in J$ with
$\ell(x's)<\ell(x')$.  Consequently, 
$$
\ell(w')=\ell(u)+\ell(x') = \ell(u)+\ell(x's)+\ell(s).
$$ 
Hence by properties (d), (a) and (b), one has, respectively, 
\begin{equation}
\label{Leibniz-induction}
\begin{aligned}
\partial_{w'}( S_u S_x ) 
  & =  \partial_u \partial_{x's} \partial_s ( S_u S_x ) \\
  & =  \partial_u \partial_{x's} \left( \partial_s(S_u) \cdot S_x 
                                + s(S_u) \partial_s(S_x ) \right)\\
  & =  \partial_u \partial_{x's} ( S_u \partial_s(S_x ) ).
\end{aligned}
\end{equation}

Now consider two cases.

\vskip .1in
\noindent
{\sf Case 1.} $\ell(xs) > \ell(x)$.  Then
property (b) says that $\partial_s(S_x)=0$.  Using this in the last
line of \eqref{Leibniz-induction}, one concludes
that
$$ 
\partial_{w'}( S_u S_x ) = 0= \delta_{x,x'}
$$
since $\ell(x's)< \ell(x')$ implies $x \neq x'$.  

\vskip .1in
\noindent
{\sf Case 2.} $\ell(xs) < \ell(x)$.  Then $\partial_s(S_x)=S_{xs}$,
and 
$$
\begin{aligned}
\partial_{w'}( S_u S_x ) 
    & =  \partial_u \partial_{x's} ( S_u \partial_s(S_x ) ) \\
    & =  \partial_u \partial_{x's} ( S_u S_{xs} ) )\\
    & = \delta_{xs,x's} \\
    & = \delta_{x,x'},
\end{aligned}
$$
where the second-to-last equality applied the inductive hypothesis
to $x's$ and $xs$.
\end{proof}

We now use this lemma to find a smaller generating set for the ideal
$J_v$ as defined in \eqref{I-J-definitions} based on the descent set
$\Des(v)$.  Working more generally, for any finite Coxeter group $W$
and a choice of root data for the Hiller Schubert calculus,
define the following two $\Bbbk$-subspaces within the coinvariant
algebra $\Bbbk[V]/(\Bbbk[V]^W_+)$:
\begin{equation}
\label{Hiller-I-and-J-definitions}
\begin{aligned}
J_v&:=\Bbbk\text{-span of }\{ S_u: u \geq v \}\\
I_w&:=\Bbbk\text{-span of }\{ S_u: u \not\leq w\} = \sum_{v\in\Ess(w)} J_v.
\end{aligned}
\end{equation}
Note that in this context we can appeal to property (g) to see that
$J_v$ and $I_w$ are actually {\it ideals} within the coinvariant algebra
$\Bbbk[V]/(\Bbbk[V]^W_+)$.

\begin{theorem}
\label{general-J-generators}
Let $v$ be an element of a finite Coxeter group $W$ and $J$ a subset
of $S$.  Assume $v$ lies in $W^J$, or, equivalently, $\Des(v) \cap J =
\varnothing$.

Then $J_v$ is generated as an ideal within the coinvariant algebra 
$\Bbbk[V]/(\Bbbk[V]^W_+)$
by the set 
$$
\{ S_u: u \in W^J, \,\,  u \geq v \}.
$$
\end{theorem}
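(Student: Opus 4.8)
The plan is to show the reverse inclusion: every generator $S_u$ with $u \geq v$ (and $u$ not necessarily in $W^J$) lies in the ideal generated by $\{S_{u'} : u' \in W^J,\ u' \geq v\}$. Since these $S_u$ with $u \geq v$ span $J_v$ by definition, and since each $S_{u'}$ with $u' \in W^J,\ u' \geq v$ is itself in $J_v$, proving this reverse inclusion finishes the theorem. So fix $u \geq v$ and write its parabolic factorization $u = u' \cdot x$ with $u' \in W^J$ and $x \in W_J$; the goal is to express $S_u$, modulo $\Bbbk[V]^W_+$, as an element of the ideal generated by $S_{u'}$.

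The key computation is to expand the product $S_{u'} S_x$ in the Schubert basis of the coinvariant algebra using property (f): $S_{u'} S_x \equiv \sum_{w} c_{u',x}^w S_w$, where the sum is over $w$ with $\ell(w) = \ell(u') + \ell(x) = \ell(u)$. Here Lemma~\ref{sparsity-lemma} is exactly what is needed: among all $w$ appearing, those lying in the coset $u' W_J = u W_J$ contribute only the single term $w = u$ with coefficient $1$ (that is $c_{u',x}^w = \delta_{w,u}$ for $w \in uW_J$). Combined with the sparsity rule (g), $c_{u',x}^w = 0$ unless $w \geq u'$ and $w \geq x$; in particular every $w$ with nonzero coefficient satisfies $w \geq u' \geq v$, so every such $S_w$ already lies in $J_v$. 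Thus $S_{u'} S_x \equiv S_u + \sum_{w \neq u,\ w \geq u'} c_{u',x}^w S_w$, where the correction terms are indexed by $w \geq u' \geq v$ of the same length $\ell(u)$.

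This suggests an induction, and the natural parameter is something that decreases along the correction terms. The cleanest choice is to induct on $\ell(x)$, or perhaps better, to induct downward on $u$ in Bruhat order among elements of a fixed length $\ell(u)$ that lie above $v$: by Lemma~\ref{sparsity-lemma} the correction terms $S_w$ all have $w \geq u'$ but $w \neq u$, and one wants to argue these $w$ are ``closer to $W^J$'' — more precisely, I would try to show each such $w$ either lies in $W^J$ already (in which case $S_w \in \{S_{u'} : u' \in W^J,\ u' \geq v\}$ directly) or has a parabolic factorization $w = w' \cdot y$ with $\ell(y) < \ell(x)$, so that the inductive hypothesis applies to $S_w$. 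The rephrasing of Lemma~\ref{sparsity-lemma} via divided differences, $\partial_{w'}(S_{u'} S_x) = \delta_{x,x'}$ for $w' = u' \cdot x'$ with $\ell(x') = \ell(x)$, should let me read off that $c_{u',x}^w = \partial_w(S_{u'}S_x) = 0$ whenever $w = u' \cdot y$ with $\ell(y) = \ell(x)$ but $y \neq x$; hence the only surviving $w$ in the coset $u'W_J$ is $u$ itself, and all other surviving $w$ (those not in $u'W_J$) have a strictly shorter $W_J$-part in their factorization, which is exactly what the induction needs.

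The main obstacle is verifying that claim about the correction terms — that every $w$ with $w \geq u'$, $\ell(w) = \ell(u)$, $w \notin u'W_J$, and $c_{u',x}^w \neq 0$ has parabolic $W_J$-part of length strictly less than $\ell(x)$. The inequality $\ell(w) = \ell(u') + \ell(x)$ together with $w \geq u'$ and $w = w' y$ length-additively gives $\ell(w') + \ell(y) = \ell(u') + \ell(x)$ with $w' \geq u'$ (using that $w \geq u'$ and $u' \in W^J$ forces $w' \geq u'$, a standard Bruhat-order-under-parabolic-projection fact); hence $\ell(y) \leq \ell(x)$, and $\ell(y) = \ell(x)$ would force $w' = u'$ and then $w \in u'W_J$, contradicting our case. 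So in fact the inequality is automatic once I have the projection fact $w \geq u' \Rightarrow w' \geq u'$ for $u' \in W^J$, and I would invoke the standard lifting property of Bruhat order relative to parabolic quotients to justify it. With that in hand the induction on $\ell(x)$ closes cleanly, proving $S_u \in J_v^{\mathrm{gen}}$ and hence the theorem.
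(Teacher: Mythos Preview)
Your proof is correct and follows essentially the same approach as the paper's: both use Lemma~\ref{sparsity-lemma} to isolate $S_u$ as the unique term of $S_{u'}S_x$ lying in the coset $u'W_J$, then use property (g) together with the order-preserving parabolic projection $P^J$ to control the remaining terms. The only cosmetic difference is the induction parameter---you induct on $\ell(x)$ (the length of the $W_J$-part of the element being analyzed), while the paper inducts on the colength $\ell(w_0)-\ell(v)$ and applies the hypothesis to the strictly larger $u'\in W^J$ appearing in the correction terms; both inductions close via the same length count $\ell(w')+\ell(y)=\ell(u')+\ell(x)$ with $w'\geq u'$.
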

\begin{proof}
For any $t\in W^J$, let $J^\prime_t$ denote the ideal 
$$
\{ S_u: u\in W^J, \,\ u\geq t \}.
$$
We need to show $J^\prime_v=J_v$.
Certainly $J^\prime_v \subseteq J_v$ by definition, so it remains to
show the reverse inclusion.

Proceed by induction on the {\bf colength} $\ell(w_0)-\ell(v)$ of $v$.
In the base case where $v$ has colength $0$, $v=w_0$; therefore the
assumption $v \in W^J$ implies $J=\varnothing$, so $W^J=W$, and there
is nothing to prove.

In the inductive step, given $w \geq v$, one must show that $S_w$ lies
in $J'_v$.  Factor $w=u\cdot x$ uniquely with $u \in W^J$ and $x \in W_J$.
We will use repeatedly 
the fact (see \cite[\S 2.5]{BjornerBrenti}) that the map 
$$
\begin{aligned}
W & \overset{P^J}{\longrightarrow} W^J \\
w &\longmapsto u
\end{aligned}
$$
is order-preserving for the Bruhat order.  In particular, since it
was assumed that $w \geq v$ above, one has $u \geq v$ here.

By Lemma~\ref{sparsity-lemma}, one has
$$
\begin{aligned}
S_u S_x &= S_w + \sum_{w'} c_{u,x}^{w'} S_{w'},\,\,\text{ so that } \\
S_w &= S_u S_x - \sum_{w'} c_{u,x}^{w'} S_{w'}.
\end{aligned}
$$ Here each $w'$ appearing in the sums satisfies $w' \geq u$ by
property (g), and hence if one factors $w'=u' \cdot x'$ with $u' \in
W^J$ and $x' \in W_J$, then $w' \geq u$ implies $u' \geq u$.  But then
Lemma~\ref{sparsity-lemma} also says that $c_{u,x}^{w'}=0$ unless $u'
> u\ (\geq v)$; hence, by induction, for any $w^\prime$ with
$S_{w^\prime}$ appearing with nonzero coefficient in the right hand
sum, $J_{u^\prime}=J^\prime_{u^\prime}$ holds for the corresponding
$u^\prime$ in the factorization $w^\prime=u^\prime\cdot x^\prime$.  By
definition, $J'_{u'} \subset J'_v$, and since $S_u$ also lies in
$J'_v$ as $u \in W^J$, one concludes that $S_w$ lies in $J'_v$ as
desired.
\end{proof}

Theorem~\ref{general-bound} is then a special case of the following result, which 
is immediate from Lemma~\ref{bigrassmannian-fact} and Theorem~\ref{general-J-generators}:

\begin{theorem}
\label{Hiller-BGG-general-bound}
Let $W$ be a Coxeter system $(W,S)$ with $W$ finite, and let $w$ be an element
of $W$.  Then the ideal $I_w$ 
of the coinvariant algebra $\Bbbk[V]/(\Bbbk[V]^W_+)$, defined as in 
\eqref{Hiller-I-and-J-definitions}, is
generated by the Schubert polynomials 
$$
\{S_u: u \not\leq w, \text{ and }u\text{ is grassmannian}\}.
$$

More precisely, $I_w$ is generated by the set
$\{S_u \}$ for those $u$ for which there exist some (bigrassmannian)
$v$ in $\Ess(w)$ satisfying both $u \geq v$ and $\Des(u) = \Des(v)$.
\end{theorem}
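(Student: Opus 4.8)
The plan is to deduce the statement directly from the decomposition $I_w = \sum_{v \in \Ess(w)} J_v$ recorded in \eqref{Hiller-I-and-J-definitions}, by applying Theorem~\ref{general-J-generators} to each summand $J_v$ with a carefully chosen parabolic subset $J \subset S$ attached to $v$.

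First I would note that every $v \in \Ess(w)$ satisfies $v \neq 1$: the identity lies Bruhat-below every element, so $1 \leq w$, and hence $1$ is not even a member of $\{v \in W : v \not\leq w\}$, let alone a Bruhat-minimal one. By Lemma~\ref{bigrassmannian-fact}, such a $v$ is bigrassmannian, so in particular $|\Des(v)| \leq 1$; together with $v \neq 1$ (which forces $\Des(v)$ nonempty), this means $\Des(v) = \{s\}$ for a unique $s = s(v) \in S$. Set $J = J(v) := S \setminus \{s(v)\}$, so that $\Des(v) \cap J = \varnothing$, i.e. $v \in W^J$. Theorem~\ref{general-J-generators} then applies and shows that $J_v$ is generated within the coinvariant algebra $\Bbbk[V]/(\Bbbk[V]^W_+)$ by the set $\{S_u : u \in W^J,\ u \geq v\}$.

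Next I would identify this generating set with $\{S_u : u \geq v,\ \Des(u) = \Des(v)\}$. On the one hand, $u \in W^J$ means $\Des(u) \cap J = \varnothing$, i.e. $\Des(u) \subseteq \{s(v)\} = \Des(v)$; if moreover $u \geq v$, then $u \neq 1$ (since $v \neq 1$ and $v \leq u$), so $\Des(u)$ is nonempty and hence equals $\{s(v)\} = \Des(v)$. On the other hand, any $u$ with $\Des(u) = \Des(v)$ has $\Des(u)\cap J = \varnothing$ and so lies in $W^J$. Thus $J_v$ is generated by $\{S_u : u \geq v,\ \Des(u) = \Des(v)\}$; note each such $u$ is grassmannian (as $|\Des(u)| = |\Des(v)| = 1$) and satisfies $u \not\leq w$, because $u \geq v$ and $v \not\leq w$. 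Summing over $v \in \Ess(w)$ and using $I_w = \sum_{v \in \Ess(w)} J_v$, one concludes that $I_w$ is generated by $\bigcup_{v \in \Ess(w)} \{S_u : u \geq v,\ \Des(u) = \Des(v)\}$, which is exactly the ``more precisely'' assertion, and the first assertion follows at once since this union is a subset of $\{S_u : u \not\leq w,\ u \text{ grassmannian}\}$.

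There is essentially no serious obstacle here, since the theorem is assembled from Lemma~\ref{bigrassmannian-fact} and Theorem~\ref{general-J-generators}; the only point that needs a moment's care is the observation that $u \in W^J$ together with $u \geq v$ forces $\Des(u)$ to be the \emph{whole} singleton $\Des(v)$ rather than possibly the empty set — and this is precisely where the fact that elements of $\Ess(w)$ are nontrivial is used. Everything else is bookkeeping: choosing $J = S \setminus \Des(v)$, invoking bigrassmannianity to know $|\Des(v)| = 1$, and passing from generators of the $J_v$ to generators of their sum $I_w$.
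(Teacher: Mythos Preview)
Your proposal is correct and follows essentially the same route as the paper, which simply states that the theorem ``is immediate from Lemma~\ref{bigrassmannian-fact} and Theorem~\ref{general-J-generators}.'' You have filled in the details the paper omits: choosing $J = S \setminus \Des(v)$ for each $v \in \Ess(w)$, and in particular the small argument that $u \in W^J$ with $u \geq v \neq 1$ forces $\Des(u) = \Des(v)$ rather than merely $\Des(u) \subseteq \Des(v)$, which is exactly what the ``more precisely'' clause requires.
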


\section{Reducing presentations in $H^{\star}(G/P)$ to those in $H^{\star}(G/B)$}
\label{parabolic-section}

  We explain in this section how Theorem~\ref{Hiller-BGG-general-bound}
leads to a shorter presentation more generally for the cohomology of Schubert
varieties in any partial flag manifold.

 Given $J \subset S$, one has the parabolic subgroup $W_J$ of $W$
generated by $J$.  One also has a corresponding parabolic subgroup $P_J$ of $G$,
generated by the Borel subgroup $B$ together with representatives within $G$
that lift the elements $J \subset W=N_G(T)/T$.
The Borel picture identifies the cohomology ring
of the {\bf (generalized) partial flag manifold}  $G/P_J$ as the subring of
$W_J$-invariants inside the cohomology of $G/B$.  In other words, 
the quotient map 
\[G/B \overset{\pi}{\rightarrow} G/P_J\] 
induces the inclusion
\begin{equation}
\label{parabolic-inclusion}
H^\star(G/P_J) \cong H^\star(G/B)^{W_J} \overset{i}{\hookrightarrow} H^\star(G/B).
\end{equation}

Recall that $W^J$ denotes the set of minimum-length coset
representatives for $W/W_J$.  Inside $H^\star(G/B)$, the cohomology
classes $\{ \sigma_w: w \in W^J\}$ lie in this $W_J$-invariant
subalgebra $H^\star(G/B)^{W_J}$ and form a $\Bbbk$-basis identified
with the $\Bbbk$-basis of Schubert cohomology classes $\sigma_{wW_J}$
for $H^\star(G/P_J)$.  One also has that the pre-image of Schubert
varieties in $G/P_J$ are certain Schubert varieties of $G/B$:
specifically,
$$
\pi^{-1}(X_{wW_J}) = X_{w_{\max}},
$$
where $w_{\max}$ is the unique {\it maximum}-length coset representative in $wW_J$.

  Working more generally with the Hiller Schubert calculus for any finite Coxeter
system $(W,S)$, the inclusion \eqref{parabolic-inclusion} generalizes to the inclusion
\begin{equation}
\label{Hiller-parabolic-inclusion}
\Bbbk[V]^{W_J}/ \Bbbk[V]^W_+ \Bbbk[V]^{W_J} \cong 
  \left( \Bbbk[V] / ( \Bbbk[V]^W_+ ) \right)^{W_J}
    \overset{i}{\hookrightarrow} \Bbbk[V]/(\Bbbk[V]^W_+).
\end{equation}
The first isomorphism shown in \eqref{Hiller-parabolic-inclusion} 
is a consequence of the fact that one has an {\it averaging} map 
\begin{equation}
\label{parabolic-retraction}
\begin{array}{rll}
\Bbbk[V] &\overset{\rho}{\longrightarrow} &\Bbbk[V]^{W_J} \\
f &\longmapsto &\frac{1}{|W_J|} \sum_{w \in W_J} w(f)
\end{array}
\end{equation}
which provides a $\Bbbk[V]^{W_J}$-linear (and hence also
$\Bbbk[V]^W$-linear) retraction map for $i$, meaning that $\rho \circ
i = 1_{\Bbbk[V]^{W_J}}$.  Inside $\Bbbk[V]/(\Bbbk[V]^W_+)$, the Hiller
Schubert polynomials $\{ S_w : w \in W^J\}$ lie in this
$W_J$-invariant subalgebra $\left( \Bbbk[V] / ( \Bbbk[V]^W_+ )
\right)^{W_J}$ and provide a $\Bbbk$-basis for it.

  The retraction in \eqref{parabolic-retraction} 
also provides the relation between the cohomology presentations
for the Schubert varieties $X_{wW_J}$ and $X_{w_{\max}}$.  Recall that when
one has an inclusion of rings $R \overset{i}{\hookrightarrow} \hat{R}$, one can
relate ideals of $R$ and $\hat{R}$ by the operations of {\it extension} and {\it contraction}:
given an ideal $I$ in $R$, its extension $\hat{R}I$ to $\hat{R}$ is the ideal it generates
in $\hat{R}$, and given an ideal $\hat{I}$ of $\hat{R}$, its contraction to $R$ is the intersection $\hat{I} \cap R$.
Say that the inclusion 
$R \overset{i}{\hookrightarrow} \hat{R}$ is a {\it split inclusion} if it has
an $R$-linear {\it retraction}  $\hat{R} \overset{\rho}{\rightarrow} R$, 
meaning that $\rho \circ i = 1_R$.  The following
proposition about this situation is straightforward and well-known.

\begin{proposition}
\label{retraction-proposition}
Assume $R \overset{i}{\hookrightarrow} \hat{R}$ is a split inclusion,
and $\hat{I}$ is an ideal of $\hat{R}$ which is generated by its contraction $I:=\hat{I} \cap R$ to $R$.

Then a set of elements $\{g_\alpha\}$ lying in $R$ generate $\hat{I}$ as an ideal of $\hat{R}$
if and only if the same elements $\{g_\alpha\}$ generate $I=\hat{I}\cap R$ as an ideal of $R$.
\end{proposition}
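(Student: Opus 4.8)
The plan is to prove the two implications separately, noting that only the forward one uses the splitting hypothesis; the reverse is the formal transitivity of ideal extension.

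\emph{The ``if'' direction.} Suppose $\{g_\alpha\}$ generates $I = \hat I \cap R$ as an ideal of $R$, so $I = R\{g_\alpha\}$ (the ideal of $R$ they generate). By hypothesis $\hat I$ is generated as an ideal of $\hat R$ by its contraction $I$, i.e.\ $\hat I = \hat R\, I$. Since $R \subseteq \hat R$, one has $\hat R\,(R\{g_\alpha\}) = \hat R\{g_\alpha\}$ (expanding a finite $\hat R$-combination of finite $R$-combinations of the $g_\alpha$ and absorbing $\hat R \cdot R \subseteq \hat R$), whence $\hat I = \hat R\{g_\alpha\}$; that is, $\{g_\alpha\}$ generates $\hat I$ as an ideal of $\hat R$. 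This direction does not use $\rho$.

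\emph{The ``only if'' direction.} Suppose now $\{g_\alpha\}$ generates $\hat I$ as an ideal of $\hat R$. Each $g_\alpha$ then lies in $\hat I$ and in $R$, hence in $\hat I \cap R = I$, so the ideal of $R$ generated by $\{g_\alpha\}$ is contained in $I$. For the reverse containment, fix $f \in I = \hat I \cap R$ and write $f = \sum_\alpha \hat r_\alpha\, g_\alpha$ as a finite $\hat R$-linear combination, possible because $\{g_\alpha\}$ generates $\hat I$. Apply the retraction $\rho\colon \hat R \to R$: since $\rho$ is $R$-linear and each $g_\alpha$ already lies in $R$, we get $\rho(\hat r_\alpha\, g_\alpha) = \rho(\hat r_\alpha)\, g_\alpha$; and since $\rho \circ i = 1_R$ while $f \in R$, we get $\rho(f) = f$. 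Therefore $f = \sum_\alpha \rho(\hat r_\alpha)\, g_\alpha$ with every coefficient $\rho(\hat r_\alpha)$ in $R$, so $f$ lies in the ideal of $R$ generated by $\{g_\alpha\}$. As $f$ was arbitrary, $I$ equals that ideal, as desired.

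The argument is otherwise purely formal; the one step that genuinely uses a hypothesis — and so counts as the ``hard part'' — is sliding $\rho$ past the generators $g_\alpha$ in the displayed combination, which is legitimate precisely because the $g_\alpha$ were chosen inside $R$ and $\rho$ is a retraction of $R$-modules, not merely of $\Bbbk$-vector spaces or abelian groups. In the intended application one takes $R = \Bbbk[V]^{W_J}/\Bbbk[V]^W_+\Bbbk[V]^{W_J}$, $\hat R = \Bbbk[V]/(\Bbbk[V]^W_+)$, $\rho$ the averaging map \eqref{parabolic-retraction}, and $\hat I = I_{w_{\max}}$; combining this proposition with Theorem~\ref{Hiller-BGG-general-bound} then transports the generating set for $I_{w_{\max}}$ to one for the ideal $\hat I \cap R$ defining $X_{wW_J}$ inside $H^\star(G/P_J)$, once one verifies the hypothesis that $\hat I$ is generated by its contraction to $R$.
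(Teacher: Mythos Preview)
Your proof is correct. The paper does not actually give a proof of this proposition, declaring it ``straightforward and well-known,'' and your argument is exactly the natural one: the ``if'' direction is the transitivity of ideal extension, and the ``only if'' direction pushes an $\hat R$-linear combination through the $R$-linear retraction $\rho$, using commutativity and $g_\alpha \in R$ to pull the generators out.
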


We will apply this proposition to the split inclusion \eqref{Hiller-parabolic-inclusion}
and these ideals 
\begin{equation}
\label{parabolic-ideal-definition}
\begin{array}{llll}
I=I_{wW_J} &:= \Bbbk\text{-span of }\{S_u: u \in W^J, \,\, uW_J \not\leq wW_J\} 
          &\subset  \left( \Bbbk[V] / ( \Bbbk[V]^W_+ ) \right)^{W_J}& = R\\
{\hat I} = I_{w_{\max}} &:= \Bbbk\text{-span of }\{S_u: u \in W,  \,\, u \not\leq w_{\max}\} 
                &\subset  \Bbbk[V] / ( \Bbbk[V]^W_+ ) & ={\hat R}
\end{array}
\end{equation}
which in the case where $(W,S)$ comes from an algebraic group $G$ have the following interpretations
as kernels:
\begin{equation}
\label{kernel-interpretations}
\begin{aligned}
I_{wW_J} &=\ker\left( H^\star(G/P_J) \overset{i^\star}{\rightarrow} H^\star(X_{wW_J}) \right) \\
I_{w_{\max}}&=\ker\left( H^\star(G/B) \overset{i^\star}{\rightarrow} H^\star(X_{w_{\max}}) \right).
\end{aligned}
\end{equation}
Borel's picture already gives a very short presentation for
$H^\star(G/P_J)$ or more generally $\left( \Bbbk[V] / ( \Bbbk[V]^W_+ )
\right)^{W_J}$, as we now explain.  The isomorphism in
\eqref{Hiller-parabolic-inclusion} says that a presentation of $\left(
\Bbbk[V] / ( \Bbbk[V]^W_+ ) \right)^{W_J}$ is equivalent to a
presentation of the quotient $\Bbbk[V]^{W_J}/ \Bbbk[V]^W_+
\Bbbk[V]^{W_J}$.  Since both $W_J$ and $W$ are finite reflection groups
acting on $V$, their invariant rings are both polynomial algebras
$$
\begin{aligned}
\Bbbk[V]^{W_J}&=\Bbbk[g_1,\ldots,g_n] \\
\Bbbk[V]^W &=\Bbbk[f_1,\ldots,f_n],\\
\end{aligned}
$$
and hence the quotient can be presented as a graded complete intersection ring:
$$
\begin{aligned}
\left( \Bbbk[V] / ( \Bbbk[V]^W_+ ) \right)^{W_J}
  &\cong \Bbbk[V]^{W_J}/ \Bbbk[V]^W_+ \Bbbk[V]^{W_J} \\
  &\cong \Bbbk[g_1,\ldots,g_n]/ (f_1,\ldots,f_n).
\end{aligned}
$$
Thus we only need to provide generators for the ideal $I_{wW_J}$.

\begin{theorem}
\label{parabolic-generators-theorem}
Let $(W,S)$ be a finite Coxeter system, with $J \subseteq S$ and $w$ in $W$,
and $w_{\max}$ the unique maximum-length representative of $wW_J$. 
Consider the inclusion
$$
R:=\left( \Bbbk[V] / ( \Bbbk[V]^W_+ ) \right)^{W_J} 
\quad
\overset{i}{\hookrightarrow}
\quad
\hat{R}:=\Bbbk[V] / ( \Bbbk[V]^W_+ ).
$$
Then the following hold.
\begin{enumerate}
\item[(i)] The essential set $\Ess(w_{\max})$ lies entirely in $W^J$.
\item[(ii)] The ideal $I_{w_{\max}}$ of $\hat{R}$ is generated by
its contraction $I_{w_{\max}} \cap R$.
\item[(iii)] This contraction is the same as $I_{wW_J}$. 
\item[(iv)] The set 
$$
\bigcup_{v \in \Ess(w)} \{S_u: u \geq v, \Des(u) = \Des(v)\}
$$
both generates $I_{w_{\max}}$ as an ideal of $\hat{R}$ and also generates
the contraction $I_{wW_J}$ as an ideal of~$R$.
\end{enumerate}
\end{theorem}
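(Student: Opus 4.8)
The plan is to transport Theorem~\ref{Hiller-BGG-general-bound}, applied to the longest coset representative $w_{\max}$, across the split inclusion of \eqref{Hiller-parabolic-inclusion} using Proposition~\ref{retraction-proposition}. The four assertions separate the work cleanly: (i) is a pure Bruhat-order statement, (ii) and (iii) are bookkeeping reconciling the two coinvariant-algebra ideals, and (iv) is the final assembly. So I would (i) prove $\Ess(w_{\max})\subseteq W^J$ from the Lifting Property of the Bruhat order; (ii) feed this into Theorem~\ref{general-J-generators} to see $I_{w_{\max}}$ has a generating set living in $R$; (iii) compare Schubert-basis supports to identify the contraction; and (iv) apply Proposition~\ref{retraction-proposition} together with the explicit generators of Theorem~\ref{Hiller-BGG-general-bound}.

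\emph{Part (i).} Since $w_{\max}$ is the longest element of its coset $wW_J$, one has $J\subseteq\Des(w_{\max})$: for $s\in J$ the element $w_{\max}s$ again lies in $wW_J$, so it cannot be longer than $w_{\max}$, forcing $w_{\max}s<w_{\max}$. Now suppose, for contradiction, that some $v\in\Ess(w_{\max})$ had a descent $s\in\Des(v)\cap J$. Then $vs<v$, and Bruhat-minimality of $v$ among $\{x:x\not\le w_{\max}\}$ forces $vs\le w_{\max}$; comparing lengths ($\ell(vs)=\ell(v)-1$ versus $\ell(w_{\max}s)=\ell(w_{\max})-1$) rules out $vs=w_{\max}$, so in fact $vs<w_{\max}$. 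Applying the Lifting Property of the Bruhat order (\cite[Prop.~2.2.7]{BjornerBrenti}, in its right-multiplication form) to $vs<w_{\max}$ and the right descent $s\in\Des(w_{\max})$ yields $(vs)s=v\le w_{\max}$, contradicting $v\not\le w_{\max}$. Hence $\Des(v)\cap J=\varnothing$, i.e.\ $\Ess(w_{\max})\subseteq W^J$.

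\emph{Parts (ii) and (iii).} For (ii): by (i) every $v\in\Ess(w_{\max})$ satisfies the hypothesis $v\in W^J$ of Theorem~\ref{general-J-generators}, so each $J_v$, and therefore $I_{w_{\max}}=\sum_{v\in\Ess(w_{\max})}J_v$, is generated in $\hat R$ by Schubert classes $S_u$ with $u\in W^J$, which lie in the subalgebra $R$ (whose $\Bbbk$-basis is $\{S_u:u\in W^J\}$); as this generating set is contained in $I_{w_{\max}}\cap R\subseteq I_{w_{\max}}$, the contraction $I_{w_{\max}}\cap R$ already generates $I_{w_{\max}}$. For (iii): both $I_{w_{\max}}$ and $R$ are $\Bbbk$-spans of subsets of the Schubert basis $\{S_u\}_{u\in W}$ of $\hat R$, so $I_{w_{\max}}\cap R=\Bbbk\text{-span of }\{S_u:u\in W^J,\ u\not\le w_{\max}\}$; and for $u\in W^J$ the standard quotient-Bruhat-order facts give $u\le w_{\max}\iff u\le w^J\iff uW_J\le wW_J$ (using order-preservation of $P^J$ together with $w^J\le w_{\max}$ and $P^J(w_{\max})=w^J$; see \cite[\S2.5]{BjornerBrenti}), whence $I_{w_{\max}}\cap R=I_{wW_J}$.

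\emph{Part (iv) and the main obstacle.} By (ii) and (iii), the split inclusion $i\colon R\hookrightarrow\hat R$ of \eqref{Hiller-parabolic-inclusion} with retraction \eqref{parabolic-retraction}, the ideal $\hat I=I_{w_{\max}}$, and its contraction $I=I_{wW_J}$ satisfy the hypotheses of Proposition~\ref{retraction-proposition}. Applying Theorem~\ref{Hiller-BGG-general-bound} to $w_{\max}$ gives the generating set $\bigcup_{v\in\Ess(w_{\max})}\{S_u:u\ge v,\ \Des(u)=\Des(v)\}$ for $I_{w_{\max}}$ as an ideal of $\hat R$; by (i) every such $u$ has $\Des(u)=\Des(v)$ disjoint from $J$, hence $u\in W^J$ and $S_u\in R$, so this set lies in $R$. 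Proposition~\ref{retraction-proposition} then asserts that the very same set generates $I_{wW_J}$ as an ideal of $R$, which is (iv). The substantive point is (i): once one recognizes that passing to the longest coset representative forces $\Ess$ into $W^J$ — for which the Lifting Property is the natural tool — everything else is assembling already-established results, so I expect (i) to be the only real obstacle, with a minor secondary care needed in handling the Bruhat order on $W/W_J$ in (iii).
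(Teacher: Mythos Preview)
Your proof is correct and follows essentially the same strategy as the paper: (i) via the Lifting Property, (ii) by invoking the generators from Theorem~\ref{general-J-generators}/\ref{Hiller-BGG-general-bound} and observing they lie in $R$, (iii) by identifying the Schubert-basis support of the contraction, and (iv) via Proposition~\ref{retraction-proposition}. The only noteworthy difference is in (iii): the paper argues via the subword characterization of Bruhat order applied to a concatenated reduced word $\omega_1\cdot\omega_2$ for $w_{\max}$, whereas you use directly that the projection $P^J$ is order-preserving together with $w^J\le w_{\max}$ and $P^J(w_{\max})=w^J$ --- your route is slightly cleaner and avoids a delicate point about which letters a reduced subword for $u\in W^J$ must use.
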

\begin{proof}
For assertion (i), assume for the sake of contradiction that $v\in
\Ess(w_{\max})$ and that $vs < v$ for some $s$ in $J$.  Since $v$ is
Bruhat-minimal among the elements {\it not} below $w_{\max}$, this
implies $vs \leq w_{\max}$.  However $w_{\max} s < w_{\max}$ by
maximality of $w_{\max}$ within $wW_J$, so the {\it lifting property}
\cite[Prop. 2.2.7, Cor. 2.2.8]{BjornerBrenti} of Bruhat order implies
that $v \leq w_{\max}$, a contradiction.

For assertion (ii), apply Theorem~\ref{Hiller-BGG-general-bound} to
$w_{\max}$ to conclude that $I_w$ is generated by the set
$\{S_u\}$ for those $u$ for which there exist $v\in
\Ess(w_{\max})$ satisfying both $u \geq v$ and $\Des(u) \subseteq
\Des(v)$.  By (i), this forces $u$ to lie in $W^J$, so that $S_u$ is
$W^J$-invariant and therefore lies in $R$.

For assertion (iii), from the definition
\eqref{parabolic-ideal-definition} and the fact that $R$ has a
$\Bbbk$-basis given by $\{S_u: u \in W^J\}$, it suffices to show that
for any $u$ in $W^J$, one has $uW_J \leq wW_J$ if and only if $u \leq
w_{\max}$.  By definition, $uW_J \leq wW_J$ if and only if $u \leq
w_{\min}$, where $w_{\min}$ is the unique representative of $wW_J$
lying in $W^J$.  The usual parabolic factorization $W=W^J \cdot W_J$
allows one to write down a reduced word $\omega$ for $w_{\max}$ in the
concatenated form
$$
\omega=\omega_1 \cdot \omega_2
$$
where the prefix $\omega_1$ factors $w_{\min}$, and the suffix
$\omega_2$ contains only generators in $J$.  The {\it subword
characterization} of Bruhat order \cite[Cor. 2.2.3]{BjornerBrenti}
shows that $u \leq w_{\max}$ if and only if $u$ is factored by a
reduced subword of this word $\omega$; this subword must necessarily use no
generators from $J$ (since $u$ is in $W^J$) and hence must actually be a
subword of $\omega_1$.  Thus $u \leq w_{\max}$ if and only if $u \leq
w_{\min}$, as desired.

Assertion (iv) then follows from assertions (ii), (iii) and Proposition~\ref{retraction-proposition}.
\end{proof}

\section{Refinements in Type $A$}
\label{type-A-section}

We investigate further the situation when $W$ is a Weyl group of 
type $A_{n-1}$, which exhibits extra features; one can: 
\begin{enumerate}
\item[$\bullet$] be more explicit about bigrassmannians and their essential sets $\Ess(w)$,
\item[$\bullet$] produce even smaller generating sets for the ideals
$I_w$ and $J_v$, which are conjecturally minimal in the case of $J_v$, and
\item[$\bullet$] work with $\ZZ$ coefficients rather than
over a field $\Bbbk$ of characteristic zero.
\end{enumerate}

\subsection{Schubert conditions and bigrassmannians}
\label{bigrassmannian-subsection}

  In type $A_{n-1}$, points in the variety $G/B$ 
are identified with complete flags of subspaces
$$
\langle 0\rangle \subset V_1 \subset \cdots \subset V_{n-1} \subset \CC^n
$$ having $\dim_\CC V_i = i$.  Pick as our particular base flag $\{
\CC^i \}_{i=0}^n$, where $\CC^i$ is spanned by the first $i$ standard
basis elements; this flag is fixed by the Borel subgroup $B$
consisting of the invertible upper-triangular matrices within
$G:=GL_n(\CC)$.  Picking the maximal torus $T$ of invertible diagonal
matrices, one identifies the Weyl group $W=N_G(T)/T$ with the
symmetric group $S_n$.  The Coxeter generators $S$ for $W=S_n$
associated to our Borel subgroup $B$ is the set of adjacent
transpositions $S=\{(1\leftrightarrow 2), (2\leftrightarrow
3),\ldots,(n-1 \leftrightarrow n)\}$.

  The Schubert variety $X_w$ corresponding to a
permutation $w=w_1 w_2 \cdots w_n\in S_n$ (written in one-line notation)
can be defined as the subvariety
of flags satisfying the conjunction of the {\bf Schubert conditions}
\begin{equation}
\label{Schubert-condition}
\dim_\CC (V_r \cap \CC^s) \geq t
\end{equation}
where 
\begin{equation}
\label{eqn:rankfn}
t={t}_{r,s}(w):=|\{w_1,w_2,\ldots,w_r\} \cap \{1,2,\ldots,s\}|
\end{equation}
for all $r,s=1,2,\ldots,n-1$ is the {\bf rank function} associated to $w$.  
Denote the condition \eqref{Schubert-condition}
by $C_{r,s,t}$ (for arbitrary $t$, not necessarily of the form (\ref{eqn:rankfn})).  
Note that $C_{r,s,t}$ is vacuous unless $t > r+s-n$.

The following 
explicit identification of bigrassmannian permutations 
is well-known and straightforward. 

\begin{lemma}
The bigrassmannian permutations (other than the identity) are
parameterized by $r,s,t$ with $1\leq t \leq r,s \leq n$ and $t>r+s-n$.
Let $v_{r,s,t,n}$ denote the unique bigrassmannian permutation
$v_1\ldots v_n\in S_n$ such that
\begin{enumerate}
\item[$\bullet$] $\Des(v)=\{(r\leftrightarrow r+1)\}$ 
\item[$\bullet$] $\Des(v^{-1})=\{(s\leftrightarrow s+1)\}$, and
\item[$\bullet$] $v_t=s+1$.
\end{enumerate}
Then explicitly we have:
\begin{equation}
\label{v-explicitly}
\begin{aligned}
v_{r,s,t,n}&:=(1, 2, \ldots,t-1 , \\
        & \qquad s+1, s+2, \ldots, s+r-t+1, \\
        & \quad \qquad t, t+1,t+2, \ldots,s,\\
        & \qquad \qquad s+r-t+2,s+r-t+3,\ldots,n).
\end{aligned}
\end{equation}
\end{lemma}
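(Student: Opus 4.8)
The plan is to verify directly that the permutation $v = v_{r,s,t,n}$ written out in \eqref{v-explicitly} is bigrassmannian with the three asserted properties, and then to argue uniqueness. I would organize the proof around four checks. First, confirm $v$ is a genuine permutation: the one-line notation lists the blocks $(1,\ldots,t-1)$, then $(s+1,\ldots,s+r-t+1)$, then $(t,\ldots,s)$, then $(s+r-t+2,\ldots,n)$; counting entries gives $(t-1) + (r-t+1) + (s-t+1) + (n - s - r + t - 1) = n$, and the four value-blocks partition $\{1,\ldots,n\}$. One should also check the hypotheses $1 \le t \le r,s \le n$ and $t > r+s-n$ guarantee that every block has nonnegative length, e.g.\ the second block has length $r-t+1 \ge 1$ and the last has length $n - (s+r-t+1) \ge 0$ precisely because $t > r+s-n$.

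Second, compute $\Des(v)$. Within each of the four blocks the entries are strictly increasing, so a descent can only occur at a block boundary. At position $r-1 \to r$ (the junction between block two and block three) the value drops from $s+r-t+1$ down to $t$, which is a descent since $s \ge t$ implies $s+r-t+1 > t$; I'd check the other two boundaries (positions $t-1$ and $s+r-t+1$) are ascents, using $s+1 > t-1$ and $s < s+r-t+2$. Hence $\Des(v) = \{(r \leftrightarrow r+1)\}$, confirming $v$ is grassmannian with the stated descent. Third, to get $\Des(v^{-1})$, note $\Des(v^{-1})$ records the values $j$ such that $j+1$ appears before $j$ in the one-line notation of $v$; equivalently it is the descent set of $v^{-1}$, whose one-line notation can be read off from \eqref{v-explicitly}. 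The only value-inversion of consecutive integers is the pair $(s, s+1)$: the value $s+1$ sits in block two (position $t$) while $s$ sits in block three (position $r-1$... more precisely the last entry of block three is $s$, at position $t-1 + (r-t+1) + (s-t+1) = r + s - t + 1$, wait — at position $s+r-t+1$), so $s+1$ precedes $s$; for every other $j$, $j$ precedes $j+1$. This gives $\Des(v^{-1}) = \{(s \leftrightarrow s+1)\}$, so $v^{-1}$ is grassmannian too and $v$ is bigrassmannian. The property $v_t = s+1$ is immediate from the formula, since position $t$ is the first entry of the second block.

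Fourth, uniqueness. A bigrassmannian $v \ne 1$ has $\Des(v) = \{(r\leftrightarrow r+1)\}$ a singleton, so its one-line notation is increasing on $\{1,\ldots,r\}$ and increasing on $\{r+1,\ldots,n\}$; it is thus determined by the set $A := \{v_1,\ldots,v_r\}$. Likewise $\Des(v^{-1}) = \{(s\leftrightarrow s+1)\}$ forces $v^{-1}$ to be increasing on $\{1,\ldots,s\}$ and on $\{s+1,\ldots,n\}$; translating, this says that for $i \le s$ the positions of $1,\ldots,s$ occur in increasing order and similarly for $s+1,\ldots,n$. One deduces that $A$ must be an interval-union of the form $\{1,\ldots,j\} \cup \{s+1,\ldots,s+r-j\}$ for some $j$, and then the single remaining parameter $j$ is pinned down by $v_t = s+1$, which forces $j = t-1$. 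This recovers exactly \eqref{v-explicitly}, and conversely shows every bigrassmannian arises this way for a unique admissible triple $(r,s,t)$. I expect the main obstacle to be the bookkeeping in this uniqueness step — carefully extracting from ``$\Des(v^{-1})$ is a singleton'' the precise combinatorial shape of the value set $A$ — rather than anything conceptually deep; an alternative, possibly cleaner route is to count: the number of bigrassmannians is known to equal the number of nonempty order intervals, or one can cite that bigrassmannians correspond bijectively to join-irreducibles of Bruhat order (Lascoux--Sch\"utzenberger), matching the parameter count $\#\{(r,s,t) : 1 \le t \le r,s \le n,\ t > r+s-n\}$, so that exhibiting the explicit family with distinct $(\Des(v),\Des(v^{-1}),v_t)$-data already forces the parameterization to be a bijection.
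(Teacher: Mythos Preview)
Your approach is correct and is precisely the direct verification one would carry out. Note that the paper does not actually prove this lemma: it labels the statement ``well-known and straightforward'' and gives no argument. Your four-step check (well-definedness as a permutation, computing $\Des(v)$, computing $\Des(v^{-1})$, and uniqueness via the structure of $A=\{v_1,\ldots,v_r\}$) is exactly the natural way to fill in that omission.

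One small indexing slip to clean up: in your second step you write ``at position $r-1 \to r$ (the junction between block two and block three),'' but block two occupies positions $t,\ldots,r$ and block three begins at position $r+1$, so the relevant drop is between positions $r$ and $r+1$ --- which is what you want, since the descent is $(r\leftrightarrow r+1)$. The values you quote, $v_r=s+r-t+1$ and $v_{r+1}=t$, are correct. Similarly, your parenthetical self-correction about the position of the value $s$ lands on the right answer ($r+s-t+1$); just tidy the exposition. The uniqueness argument deducing $A=\{1,\ldots,t-1\}\cup\{s+1,\ldots,s+r-t+1\}$ from the two grassmannian conditions is clean and suffices; the counting alternative you mention at the end is unnecessary.
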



There is a simple relation between these Schubert conditions $C_{r,s,t}$
and the bigrassmannian permutations in $W=S_n$:

\begin{proposition}
\label{bigrassmannian-as-Schubert-condition}
Let $w\in S_n$.
Then the Schubert condition $C_{r,s,t}$ is satisfied by all flags in $X_w$
if and only if $v_{r,s,t,n} \not\leq w$.
\end{proposition}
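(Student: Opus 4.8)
The plan is to prove the equivalence by relating both sides to the combinatorics of the rank function $t_{r,s}(\cdot)$. First I would recall that $X_w$ is exactly the set of flags satisfying all the conditions $C_{r,s,t_{r,s}(w)}$, and that on $X_w$ the condition $C_{r,s,t}$ holds for every flag if and only if $t \le t_{r,s}(w)$ (for $t > t_{r,s}(w)$ the generic flag in $X_w$ violates it, since the open cell $X_w^\circ$ consists of flags where the ranks are exactly $t_{r,s}(w)$). So the left-hand side of the proposition is equivalent to the purely combinatorial inequality $t \le t_{r,s}(w)$.

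Next I would translate the right-hand side into the same language. The natural tool is the classical fact that the Bruhat order on $S_n$ is the componentwise comparison of rank functions: $u \le w$ if and only if $t_{r,s}(u) \ge t_{r,s}(w)$ for all $r,s$. Applying this to $u = v_{r,s,t,n}$, the condition $v_{r,s,t,n} \not\le w$ becomes: there exist indices $r',s'$ with $t_{r',s'}(v_{r,s,t,n}) < t_{r',s'}(w)$. The key computation is therefore to evaluate the rank function of the bigrassmannian permutation $v_{r,s,t,n}$ given explicitly by \eqref{v-explicitly}. The point I expect to come out of this calculation is that $v_{r,s,t,n}$ is designed precisely so that $t_{r,s}(v_{r,s,t,n}) = t - 1$, and moreover that $(r,s)$ is the \emph{unique} pair at which $v_{r,s,t,n}$ can ever fail the rank inequality against another permutation — equivalently, $v_{r,s,t,n}$ is the Bruhat-minimal permutation $u$ with $t_{r,s}(u) \le t-1$, so that $v_{r,s,t,n} \le w$ if and only if $t_{r,s}(w) \ge t$, i.e. if and only if $t_{r,s}(w) \le t - 1$ \emph{fails}.

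Assembling these: $C_{r,s,t}$ holds on all of $X_w$ $\iff$ $t \le t_{r,s}(w)$ $\iff$ $t_{r,s}(w) < t$ is false $\iff$ ($v_{r,s,t,n} \le w$) is false $\iff$ $v_{r,s,t,n} \not\le w$, which is the claim.

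The main obstacle is the middle step: verifying carefully that $v_{r,s,t,n}$ is exactly the Bruhat-minimal permutation $u$ satisfying $t_{r,s}(u) \le t-1$. Concretely this means checking two things from the one-line notation \eqref{v-explicitly}: (a) that $t_{r,s}(v_{r,s,t,n}) = t-1$, which is a direct count of how many of the first $r$ values $v_1,\dots,v_r$ lie in $\{1,\dots,s\}$ — from the explicit formula the first $r$ entries are $1,\dots,t-1$ followed by $s+1,\dots,s+r-t+1$, giving exactly $t-1$ entries that are $\le s$; and (b) minimality, i.e. that any $u \not\ge$ the identity with $t_{r,s}(u) \le t-1$ already dominates $v_{r,s,t,n}$ in all rank functions. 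For (b) it is cleanest to observe that $v_{r,s,t,n}$ is obtained from the identity by the \emph{minimal} rearrangement forcing the single rank drop at $(r,s)$ — pushing the $t$-th smallest unused small value out past position $r$ and pulling in the smallest large value — and that any permutation with a rank drop at $(r,s)$ must contain this pattern, hence lies Bruhat-above it. Alternatively one can cite directly that the $v_{r,s,t,n}$ are precisely the join-irreducibles of Bruhat order and each ``controls'' a single rank condition, a fact from the Lascoux--Sch\"utzenberger theory referenced in the introduction; but giving the self-contained rank-function argument is preferable here since everything else in the proof is elementary.
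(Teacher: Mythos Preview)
Your approach is essentially the same as the paper's: the paper reduces the statement to the equivalence $t_{r,s}(w)\ge t \Leftrightarrow v_{r,s,t,n}\not\le w$ and then simply appeals to the tableau criterion for Bruhat order (which is equivalent to your rank-function criterion). Your write-up is more detailed, correctly identifying that the crux is the Bruhat-minimality of $v_{r,s,t,n}$ among permutations $u$ with $t_{r,s}(u)\le t-1$; the paper leaves this as ``straightforward to check.''

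One slip to fix: in your middle paragraph you write ``$v_{r,s,t,n}\le w$ if and only if $t_{r,s}(w)\ge t$, i.e.\ if and only if $t_{r,s}(w)\le t-1$ fails.'' This is backwards. If $v_{r,s,t,n}$ is the Bruhat-minimum of the upper order filter $\{u:t_{r,s}(u)\le t-1\}$, then $v_{r,s,t,n}\le w$ holds precisely when $w$ lies in that filter, i.e.\ when $t_{r,s}(w)\le t-1$. You silently correct this in your final chain of equivalences (where you write ``$t_{r,s}(w)<t$ is false $\iff$ $(v_{r,s,t,n}\le w)$ is false''), so the assembled argument is fine, but the inconsistency should be cleaned up.
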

\begin{proof}
Note $C_{r,s,t}$ is a Schubert condition on $X_w$ if and only
if $t_{r,s}(w) \geq t$.  It is then straightforward to check that the latter is
equivalent to $v_{r,s,t,n} \not\leq w$ using the {\it tableau criterion}
\cite[Theorem 2.6.3]{BjornerBrenti} for comparing elements in the Bruhat ordering.
\end{proof}

Note that imposing an arbitrary conjunction of Schubert conditions on 
complete flags cuts out a $B$-stable subvariety of $G/B$, 
but this subvariety may be a {\it reducible}
union of Schubert varieties rather than a single Schubert variety $X_w$.
However, in type $A$, when one imposes a {\it single} Schubert condition, the result is
always a (single) Schubert variety. This fact can be
traced to special properties of the Bruhat order in type $A$,
%
%
first identified by Lascoux and Sch\"utzenberger \cite{LascouxSchutzenberger}, 
and exploited further by Geck and Kim \cite{GeckKim} and Reading \cite{Reading}. 
To explain this, we first recall some terminology.

\begin{definition}
In a poset $P$, say that an element $v$ is a {\it dissector} of $P$ if
there exists a (necessarily unique) element $w$ in $P$ for which $P$
decomposes as the disjoint union of the principal order filter above $v$ and the
principal order ideal below $w$:
$$
P= \{u \in P: u \geq v \} \quad \bigsqcup \quad \{u \in P: u \leq w \}.                                                              
$$
Say that an element $a$ in a poset $P$ (which need not be a lattice) is
{\it join-irreducible}
if there does not exist a subset $X \subset P$ with $a \not\in X$
such that $a$ is the least element among all upper bounds for $X$ in $P$.
\end{definition}

There are two subtle issues to point out in this definition of
join-irreducibles.  Firstly, when the finite poset is a {\it lattice}, an element
is join-irreducible if and only if it covers a unique element.  
However, for non-lattices, one can have join-irreducibles 
that cover more than one element.  
For example, the strong Bruhat order in type $A_2$ has four non-minimal, non-maximal elements,
each of which is join-irreducible, but two of them cover two elements.
Secondly, all of the posets that we will consider have a unique least element,
(e.g. in Bruhat order on $W$, the least element is the identity of $W$), and this least element
is {\it not} considered join-irreducible because it is the least element among all upper
bounds for the empty set $X=\varnothing$.

\begin{theorem}\cite{LascouxSchutzenberger, GeckKim, Reading}
\label{dissective-theorem}
\begin{enumerate}
\item[(i)]
In any finite poset, every dissector is join-irreducible.  When the
poset is the Bruhat order for a Coxeter system $(W,S)$ of type $A, B, H_3,H_4$ or
$I_2(m)$, the converse holds:  the join-irreducible elements are exactly the dissectors.
\item[(ii)]
In the Bruhat order for any finite Coxeter system $(W,S)$, every join-irreducible element is bigrassmannian.
In type $A$, the converse holds:  the (non-identity) bigrassmannian elements are exactly the join-irreducibles.
%
\end{enumerate}
In particular, in type $A$, for every bigrassmannian $v$ in $W$, there exists a (necessarily unique) element
$w$ in $W$ for which $\Ess(w)=\{v\}$.
\end{theorem}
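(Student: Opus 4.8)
The plan is to deduce this ``in particular'' clause directly from parts (i) and (ii) of Theorem~\ref{dissective-theorem} by unwinding the definitions of dissector and of $\Ess(w)$. Let $v\in W$ be bigrassmannian and not the identity. By part (ii) of Theorem~\ref{dissective-theorem}, $v$ is join-irreducible in the Bruhat order, and then by part (i), specialized to type $A$, this join-irreducible $v$ is in fact a dissector. By the definition of dissector there is a (unique) element $w\in W$ for which the Bruhat order on $W$ splits as the disjoint union
$$
W = \{u \in W : u \geq v\} \ \sqcup\ \{u \in W : u \leq w\},
$$
and this $w$ is precisely the element claimed in the statement.

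Next I would identify $\Ess(w)$. The displayed decomposition says exactly that the set of elements \emph{not} below $w$ coincides with the principal order filter $\{u : u \geq v\}$ generated by $v$. Every element of this filter is $\geq v$, so its unique Bruhat-minimal element is $v$ itself; hence $\Ess(w)=\{v\}$ by the definition of the essential set. For the parenthetical uniqueness assertion, note that for any $w'\in W$ the set $\{u : u \not\leq w'\}$ is an order filter in Bruhat order (if $u \not\leq w'$ and $u' \geq u$, then $u' \not\leq w'$), and a finite order filter is the union of the principal filters of its minimal elements. Thus $\Ess(w')=\{v\}$ forces $\{u : u \not\leq w'\} = \{u : u \geq v\} = \{u : u \not\leq w\}$, hence $\{u : u \leq w'\} = \{u : u \leq w\}$; taking the (unique) maximum of each side gives $w'=w$.

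Essentially all of the mathematical content here is packaged inside Theorem~\ref{dissective-theorem}, so I do not expect a serious obstacle; the only step needing a moment's care is the elementary observation that $\{u : u \not\leq w\}$ is always an order filter, which is what both converts the dissector decomposition into the identity $\Ess(w)=\{v\}$ and delivers the uniqueness of $w$.
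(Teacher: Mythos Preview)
Your argument is correct. The paper does not supply its own proof of Theorem~\ref{dissective-theorem}; it is quoted from \cite{LascouxSchutzenberger, GeckKim, Reading}, and the ``In particular'' clause is stated as an immediate consequence of parts (i) and (ii) without further justification. Your derivation---bigrassmannian $\Rightarrow$ join-irreducible $\Rightarrow$ dissector $\Rightarrow$ the complement of the principal filter above $v$ is a principal ideal below some $w$, hence $\Ess(w)=\{v\}$---is exactly the intended unpacking, and your uniqueness argument via the order-filter property of $\{u: u \not\leq w'\}$ is the natural one.
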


\begin{corollary}
In type $A_{n-1}$, a single Schubert condition $C_{r,s,t}$ on the flags
in $G/B$ cuts out the Schubert variety $X_{w_{r,s,t,n}}$ in $G/B$, where
$w_{r,s,t,n}$ is the unique element with $\Ess(w_{r,s,t,n})=\{v_{r,s,t,n}\}$ 
as in Theorem~\ref{dissective-theorem}.

Thus in type $A_{n-1}$, for any bigrassmannian 
$v_{r,s,t,n}$, one has equality of
the two ideals $J_{v_{r,s,t,n}}=I_{w_{r,s,t,n}}$ within the coinvariant algebra
$\Bbbk[V]/(\Bbbk[V]^W_+)$.
\end{corollary}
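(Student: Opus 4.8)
The plan is to deduce both assertions from the single combinatorial input furnished by Theorem~\ref{dissective-theorem}: in type $A$ every bigrassmannian element is a dissector of the Bruhat order on $W=S_n$. Applying that theorem to $v=v_{r,s,t,n}$ produces the element $w=w_{r,s,t,n}$ together with the disjoint decomposition
\[
W \;=\; \{u \in W : u \geq v_{r,s,t,n}\}\ \sqcup\ \{u \in W : u \leq w_{r,s,t,n}\}.
\]
Equivalently, $u \not\leq w_{r,s,t,n}$ if and only if $u \geq v_{r,s,t,n}$; since $v_{r,s,t,n}$ is the minimum of the principal filter above itself, this recovers $\Ess(w_{r,s,t,n}) = \{v_{r,s,t,n}\}$, so the element supplied by the dissector property is indeed the element denoted $w_{r,s,t,n}$ in the statement.

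The cohomological assertion is then immediate from the definitions in \eqref{Hiller-I-and-J-definitions}: the displayed decomposition gives
\[
I_{w_{r,s,t,n}} \;=\; \Bbbk\text{-span of }\{S_u : u \not\leq w_{r,s,t,n}\} \;=\; \Bbbk\text{-span of }\{S_u : u \geq v_{r,s,t,n}\} \;=\; J_{v_{r,s,t,n}},
\]
and both sides are already known (via property (g)) to be ideals of the coinvariant algebra $\Bbbk[V]/(\Bbbk[V]^W_+)$.

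For the geometric assertion, I would identify the closed, $B$-stable subvariety $Y \subseteq G/B$ cut out set-theoretically by the single Schubert condition $C_{r,s,t}$ with a union of Schubert cells. On the Schubert cell $X_u^\circ$ the integer $\dim_\CC(V_r \cap \CC^s)$ is constant and equals $t_{r,s}(u)$, so $X_u^\circ \subseteq Y$ precisely when $t_{r,s}(u) \geq t$; by the computation inside the proof of Proposition~\ref{bigrassmannian-as-Schubert-condition} this holds precisely when $v_{r,s,t,n} \not\leq u$, i.e.\ (by the dissector decomposition above) precisely when $u \leq w_{r,s,t,n}$. Because $Y$ is closed it contains $X_u = \overline{X_u^\circ}$ whenever it contains $X_u^\circ$, and therefore $Y = \bigcup_{u \leq w_{r,s,t,n}} X_u = X_{w_{r,s,t,n}}$, as claimed.

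The only step that requires any care is this last one, where one must read ``cuts out'' set-theoretically (or, if one wants a scheme-theoretic statement, separately check reducedness) and invoke the standard fact that a closed $B$-stable subvariety of $G/B$ is a union of Schubert varieties; with that granted there is no genuine obstacle, since everything else is a formal unwinding of Theorem~\ref{dissective-theorem} and Proposition~\ref{bigrassmannian-as-Schubert-condition}.
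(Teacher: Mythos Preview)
Your proposal is correct and follows essentially the same approach the paper intends: the corollary is stated without proof, as an immediate consequence of Theorem~\ref{dissective-theorem} (the dissector decomposition for bigrassmannians in type~$A$) together with Proposition~\ref{bigrassmannian-as-Schubert-condition}, and you have simply written out those details carefully. Your additional remark about reading ``cuts out'' set-theoretically is a fair caveat, though the paper is implicitly working at that level throughout this discussion.
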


\noindent
We remark that, as with $v_{r,s,t,n}$, one knows $w_{r,s,t,n}$ explicitly 
(see \cite[\S 8]{Reading}):
$$
\begin{aligned}
w_{r,s,t,n} &=(n, n-1, \ldots,(n-r+t+1), \\
          &  \qquad s, s-1, \ldots, s-t+1, \\
           & \quad \qquad n-r+t,n-r+t-3, \ldots,s+1,\\
           & \qquad \qquad s-t,s-t-1,\ldots,1).
\end{aligned}
$$

\subsection{Bigrassmannians and essential Schubert conditions}
\label{essential-subsection}

Next, we explain the relation between what we have called the essential set $\Ess(w)$
for $w$ and Fulton's essential set of Schubert conditions for $X_w$.

Note that there are implications among the various Schubert conditions $C_{r,s,t}$.
Fulton introduced the {\bf essential set} of a permutation, a set of coordinates 
$\{(r_i,s_i)\}\subset n\times n$ which give an inclusion-minimal subset of 
Schubert conditions $C_{r_i,s_i,t}$ with $t=t_{r_i,s_i}(w)$ that suffice to define
$X_w$ as a subset of the flag manifold. (See further remarks in Example~\ref{exa:essential_convention}
below.) Correspondingly, we call these Schubert conditions the
{\bf essential Schubert conditions} for
$X_w$; see \cite[\S3]{Fulton}, \cite[pp. 20-21]{FultonPragacz}, and
\cite[\S2]{ErikssonLinusson}.

\begin{proposition}
\label{essential-proposition}
The Schubert condition $C_{r,s,t}$ implies the Schubert condition $C_{r',s',t'}$
if and only if  $v_{r,s,t,n} \leq v_{r',s',t',n}$ in Bruhat order.

Therefore in type $A_{n-1}$, Fulton's essential set of Schubert conditions $C_{r,s,t}$ for $X_w$ 
correspond bijectively to the elements of the essential set $\Ess(w)$ for $w$ defined for a general
Coxeter group.
\end{proposition}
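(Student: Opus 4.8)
The plan is to prove the ``if and only if'' in the first sentence directly, in both directions, using the rank-function characterization already established in Proposition~\ref{bigrassmannian-as-Schubert-condition}, and then derive the bijection as an immediate corollary. Recall from the proof of Proposition~\ref{bigrassmannian-as-Schubert-condition} that, for any $w$, the Schubert condition $C_{r,s,t}$ holds on $X_w$ exactly when $v_{r,s,t,n} \not\leq w$; equivalently, $C_{r,s,t}$ holds on $X_w$ iff $w$ lies in the principal order ideal $\{u : u \not\geq v_{r,s,t,n}\}$ (using that $v_{r,s,t,n}$ is a dissector in type $A$, by Theorem~\ref{dissective-theorem}(i)). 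So the set of $w$ whose Schubert variety satisfies $C_{r,s,t}$ is precisely the set of $w$ with $w \not\geq v_{r,s,t,n}$.

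The key step: $C_{r,s,t}$ implies $C_{r',s',t',n}$ means that whenever a flag variety $X_w$ satisfies $C_{r,s,t}$ it also satisfies $C_{r',s',t'}$, i.e. $\{w : w \not\geq v_{r,s,t,n}\} \subseteq \{w : w \not\geq v_{r',s',t',n}\}$. Taking complements, this says $\{w : w \geq v_{r',s',t',n}\} \subseteq \{w : w \geq v_{r,s,t,n}\}$, i.e. the principal filter above $v_{r',s',t',n}$ is contained in the principal filter above $v_{r,s,t,n}$. Since every principal filter contains its own generator, this containment holds iff $v_{r',s',t',n} \geq v_{r,s,t,n}$, i.e. $v_{r,s,t,n} \leq v_{r',s',t',n}$. (The forward direction uses that $v_{r',s',t',n}$ lies in its own filter; the reverse direction is transitivity of $\leq$.) Here one must be slightly careful that ``implies'' is being interpreted as an implication of conditions on flags for every $X_w$ simultaneously, rather than merely as a pointwise implication on a single flag; but the pointwise reading gives the same conclusion, since $C_{r,s,t}$ holds on the flag $w_{r,s,t,n}$ (the dissector's partner $w$ in Theorem~\ref{dissective-theorem}, whose Schubert variety has $\Ess = \{v_{r,s,t,n}\}$) and a pointwise implication there forces $w_{r,s,t,n} \not\geq v_{r',s',t',n}$, hence $v_{r,s,t,n} \leq v_{r',s',t',n}$ again by dissectivity.

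For the second paragraph of the Proposition: Fulton's essential set consists of those $(r,s)$ for which the condition $C_{r,s,t_{r,s}(w)}$ is not implied by the conjunction of the other Schubert conditions of $X_w$. Translating via the first part, the full set of Schubert conditions $\{C_{r,s,t_{r,s}(w)} : t_{r,s}(w) > r+s-n\}$ corresponds to the set of bigrassmannians $\{v_{r,s,t_{r,s}(w),n}\} = \{v \text{ bigrassmannian} : v \not\leq w\}$, and an element of this set is redundant exactly when it lies above another element of the set in Bruhat order. So the essential (non-redundant) conditions correspond to the Bruhat-\emph{minimal} elements of $\{v \text{ bigrassmannian} : v \not\leq w\}$. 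Finally invoke Lemma~\ref{bigrassmannian-fact} together with Theorem~\ref{dissective-theorem}(ii): $\Ess(w)$ (the minimal elements of $\{v \in W : v \not\leq w\}$) consists entirely of bigrassmannians, and conversely any bigrassmannian minimal in the larger set is automatically minimal among \emph{all} non-$\leq w$ elements (since below a bigrassmannian there is always a bigrassmannian strictly smaller, so a minimal non-$\leq w$ element is bigrassmannian). Hence the Bruhat-minimal bigrassmannians not below $w$ coincide exactly with $\Ess(w)$, giving the desired bijection.

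I expect the main obstacle to be bookkeeping around the precise meaning of ``$C_{r,s,t}$ implies $C_{r',s',t'}$'' — making sure it is read as a statement about containment of the loci they cut out (equivalently, about which flag varieties $X_w$ satisfy them), and checking that the resulting set-containment argument is clean in both directions. A secondary subtlety is confirming, in the reduction to Fulton's essential set, that ``not implied by the other conditions'' matches ``Bruhat-minimal in $\{v \text{ bigrassmannian}: v\not\leq w\}$'' and then that this matches $\Ess(w)$; this last identification is exactly where Lemma~\ref{bigrassmannian-fact} and the ``below every bigrassmannian lies a smaller bigrassmannian'' fact (implicit in Theorem~\ref{dissective-theorem}(ii)) are needed, and it should be stated carefully rather than waved through.
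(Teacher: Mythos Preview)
Your argument for the first assertion is correct and is essentially the paper's proof rephrased: the paper passes through the dissector partner $w_{r,s,t,n}$ and the containment $X_{w_{r,s,t,n}}\subseteq X_{w_{r',s',t',n}}$, while you work directly with principal filters via Proposition~\ref{bigrassmannian-as-Schubert-condition}; both are the same dissectivity chain read from opposite ends.

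There is one slip in your treatment of the second assertion. You assert
\[
\{v_{r,s,t_{r,s}(w),n}:(r,s)\}\;=\;\{v\text{ bigrassmannian}:v\not\leq w\},
\]
but this equality is false: for fixed $(r,s)$ every $t$ with $\max(0,r+s-n)<t\leq t_{r,s}(w)$ gives a bigrassmannian $v_{r,s,t,n}\not\leq w$, and only the one with $t=t_{r,s}(w)$ appears on the left. Your subsequent paragraph correctly identifies the Bruhat-minimal elements of the \emph{right}-hand set with $\Ess(w)$, but Fulton's essential conditions correspond to the minimal elements of the \emph{left}-hand set. The missing step is the (easy) observation that these two minimal sets coincide: since increasing $t$ moves $v_{r,s,t,n}$ down in Bruhat order (by the first assertion applied with $(r,s)=(r',s')$), every element of the right-hand set lies above some element of the left-hand set, so no new minimal elements appear. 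With this one line added, your argument is complete and agrees with the paper's terse ``follows immediately from the first.'' (Your parenthetical justification ``below a bigrassmannian there is always a bigrassmannian strictly smaller'' is not the reason $\min\{v:v\not\leq w\}$ consists of bigrassmannians; that is precisely Lemma~\ref{bigrassmannian-fact}, which you already cite.)
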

\begin{proof}
For the first assertion, note that the Schubert cell decomposition for $X_w$ and
Theorem~\ref{dissective-theorem} give the following:
$$
\begin{aligned}
 C_{r,s,t} \text{ implies }C_{r',s',t'} 
 \Leftrightarrow& \qquad X_{w_{r,s,t,n}} \subseteq X_{w_{r',s',t',n}} \\
 \Leftrightarrow& \qquad \{u \in W: u \leq w_{r,s,t,n}  \} 
                               \subseteq \{u \in W: u \leq w_{r',s',t',n}\}\\
 \Leftrightarrow& \qquad \{u \in W: u \geq v_{r,s,t,n}  \} 
                               \supseteq \{u \in W: u \geq v_{r',s',t',n}\}\\
 \Leftrightarrow& \qquad v_{r,s,t,n} \leq v_{r',s',t',n}.
\end{aligned}
$$
The second assertion follows immediately from the first.
\end{proof}

\begin{example}
\label{exa:essential_convention}
In order to be explicit about the bijection asserted in
Proposition~\ref{essential-proposition}, it will be convenient for us
to use a slight adaptation of Fulton's essential set. This bijection
can be inferred from the discussion in \cite{FultonPragacz} and
\cite{GasharovReiner} (our conventions are in line with those the
latter text), and one also can thereby also give an explicit bijection
between our essential set and Fulton's essential set as originally
defined.

Given $w=w_1 w_2 \cdots w_n\in S_n$, draw an $n\times n$ matrix of ``bubbles'' $\circ$.  Replace
the bubbles in positions $(i, w_i)$ with an $\times$ and erase all bubbles in the ``hooks'' 
weakly below and (nonstandardly) to the \emph{left} of each $\times$. The 
{\bf diagram} of $w$, denoted ${\mathcal D}(w)$, consists of all bubbles left, which are those not in any hook. 
Under this convention $|{\mathcal D}(w)|=\ell(w_0 w)$. This reflects the fact that our diagram is
the left$\leftrightarrow$right mirror image of the \emph{standard} diagram of $w_0 w$;
see \cite[p.11]{FultonPragacz}. 
Fulton's essential set is then defined as the subset of ${\mathcal D}(w)$ such that
neither $(i+1,j)$ nor $(i,j-1)$ is in ${\mathcal D}(w)$. Let us denote Fulton's essential
set by $\Ess_{\rm Fulton}(w)$.  The desired bijection between bubbles in $\Ess_{\rm Fulton}(w)$
and $\Ess(w)$ sends the essential bubble with (row,column) indices $(r,s+1)$ to the
bigrassmannian $v_{r,s,t,n}$ where $t$ is the number of bubbles weakly above the essential
one in the same column.

For example, let $w=425163$. The figure below shows the positions $(i,w_i)$ marked with an $\times$,
and the bubbles in the diagram $D(w)$ shown as $\bullet$ or $\circ$ depending upon whether or not
they lie in $\Ess_{\rm Fulton}(w)$:
$$
\begin{matrix}
 &1&2&3&4&5&6 \\
1& & & &\times&\circ&\circ\\
2& &\times&\bullet& &\bullet&\circ\\
3& & & & &\times&\circ\\
4&\times& &\bullet& & &\bullet\\
5& & & & & &\times\\
6& & &\times& & & \\
\end{matrix}
$$
The following table then summarizes the bijection between the bubbles lying in Fulton's
essential set $\Ess_{\rm Fulton}(w)$, the essential Schubert conditions defining $X_w$,
and the bigrassmannians that comprise $\Ess(w)$.

\begin{tabular}{|c|c|c|}\hline
$(r,s+1) = $ (row,column)       & Schubert condition $C_{r,s,t}$:   & bigrassmannian $v=v_{r,s,t,n}$ \\
for bubble in $\Ess_{\rm Fulton}(w)$   & $\dim V_r \cap \CC^s \geq t$      & in $\Ess(w)$  \\ \hline\hline      
        &                              &        \\ 
$(2,3)$ & $\dim V_2 \cap \CC^2 \geq 1$ & 341256 \\ 
        &                              &        \\ \hline
        &                              &        \\ 
$(2,5)$ & $\dim V_2 \cap \CC^4 \geq 2$ & 152346 \\ 
        & (i.e. $V_2 \subset  \CC^4$)  &        \\ \hline
        &                              &        \\ 
$(4,3)$ & $\dim V_4 \cap \CC^2 \geq 2$ & 134526 \\ 
        & (i.e. $\CC^2 \subset  V_4$)   &        \\ \hline
        &                              &        \\ 
$(4,6)$ & $\dim V_4 \cap \CC^5 \geq 4$ & 123645 \\ 
        & (i.e. $V_4 \subset  \CC^5$)  &        \\ \hline
\end{tabular}

\end{example}

\subsection{Grassmannians and symmetric functions}
\label{LS-subsection}
  In looking for generators for the ideals $J_{v_{r,s,t,n}}$, we wish
to take advantage of symmetric function identities, so we briefly review here
the relation between symmetric functions and the Schubert calculus in type $A$.
We also point out how the calculations may be performed over $\ZZ$
rather than a coefficient field $\Bbbk$ of characteristic zero.

  Let $J:=S\setminus\{s_r\}$ where $s_r=(r \leftrightarrow r+1)$, so that
$W_J=S_r \times S_{n-r}$, and $G/P_J$ is the
{\it Grassmannian} of $r$-planes in $\CC^n$.  
The cohomology inclusion \eqref{parabolic-inclusion} or
\eqref{Hiller-parabolic-inclusion} 
remains valid working with coefficients in $\ZZ$
and becomes
$$
\begin{array}{lll}
H^\star(G/P_J)   &\cong H(G/B)^{W_J} &\hookrightarrow H(G/B)\\
\ZZ[\xx]^{W_J}/\ZZ[\xx]^W_+ \ZZ[\xx]^{W_J} 
             &\cong \left( \ZZ[\xx]/(\ZZ[\xx]^W_+) \right)^{W_J} 
                                 &\hookrightarrow \ZZ[\xx]/(\ZZ[\xx]^W_+).
\end{array}
$$
Here $\ZZ[\xx]:=\ZZ[x_1,\ldots,x_n]$ is viewed as the symmetric algebra $\ZZ[V]$, where $V$ is 
no longer the irreducible reflection representation of dimension $n-1$ for $W=S_n$ 
but rather the
natural permutation representation of dimension $n$.  In order to work over $\ZZ$,
one can replace the retraction in \eqref{parabolic-retraction}
with the {\bf Demazure operator} 
$$
\ZZ[\xx] \overset{\pi_{w_0(J)}}{\longrightarrow} \ZZ[\xx]^{W_J}
$$
associated to the longest element $w_0(J)$ in $W_J$, where
$$
\begin{aligned}
\pi_{s_i}(f) &:=\frac{x_i f-x_{i+1} s_i(f)}{x_i-x_{i+1}} 
\end{aligned}
$$
and $\pi_w :=\pi_{s_{i_1}} \cdots \pi_{s_{i_\ell}}$
if $w=s_{i_1} \cdots s_{i_\ell}$ is any reduced decomposition for $w$.

 In type $A_{n-1}$, one can replace the Hiller Schubert polynomial $S_w$ 
with {\bf Lascoux and} \linebreak {\bf Sch\"utzenberger's Schubert polynomial} $\schubp_w$ 
(see for example \cite{Macdonald, Manivel}):
one chooses the root linear functionals to be 
$x_i-x_j$ for $1 \leq i \neq j \leq n$ and
replaces the previous choice of $S_{w_0}= \prod_{i<j}(x_i-x_j)$ 
with an element which is equivalent modulo the ideal $(\ZZ[\xx]^W_+)$, namely
$$
\schubp_{w_0} :=  \xx^{\delta_n}:=x_1^{n-1} x_2^{n-2} 
\cdots x_{n-1}^1 x_n^0.
$$
Defining $\schubp_w := \partial_{w^{-1}w_0} \schubp_{w_0}$, property (c)
from Section~\ref{Hiller-section} tells us that the images of $S_w$
and $\schubp_w$ within $\ZZ[\xx]/(\ZZ[\xx]^W_+)$ are the same for all
$w\in W$. These $\schubp_w$:
\begin{enumerate}
\item[$\bullet$] lie in $\ZZ[\xx]$ and have nonnegative integer coefficients,
\item[$\bullet$] lift the cohomology classes $\sigma_w$ in the 
cohomology with {\it integer} coefficients 
\[H^\star(G/B,\ZZ) \cong \ZZ[\xx]/(\ZZ[\xx]^W_+),\] 
and
\item[$\bullet$] give us Schur functions in finite variable sets
whenever $w$ is grassmannian: if one has $\Des(w) \subseteq
\{(r,r+1)\}$, (in which case we say $u$ is $r$-{\bf grassmannian}), so that
$$
w_1 < w_2 < \cdots < w_r \text{ and }
w_{r+1} < w_{r+2} < \cdots < w_n,
$$
then
$$
\schubp_w = s_\lambda(x_1,\ldots,x_r),
$$
where $\lambda$ is the partition $\lambda=(w_r-r,\ldots,w_2-2,w_1-1)$.
Note that $\lambda$ has at most $r$ parts, all of size at most $n-r$,
so its Young diagram fits inside an $r \times (n-r)$ rectangle.
\end{enumerate}

In order both to suppress the variable set $x_1,\ldots,x_r$ from the
notation and to make more convenient use of symmetric function
identities, we will work within a quotient of the {\it ring of
  symmetric functions} with integral coefficients
$$
\Lambda=\Lambda_\ZZ(x_1,x_2,\ldots);
$$
see \cite[Chapter 1]{Macdonald-symm-fns}, \cite[Chapter 7]{Stanley-EC2}.
The $\ZZ$-basis for $\Lambda$ given by the Schur functions $s_\lambda$ 
has the property that the $\ZZ$-submodule $I_{r,n-r}$ spanned by all $s_\lambda$
with  $\lambda \not\subseteq (n-r)^r$ forms an ideal, and the
map sending $s_\lambda$ to $s_\lambda(x_1,\ldots,x_r)$ induces an isomorphism
$$
\Lambda/I_{r,n-r} \quad \overset{\sim}{\longrightarrow} \quad \ZZ[\xx]^{W_J}/\ZZ[\xx]^W_+ \ZZ[\xx]^{W_J} 
                     \quad \left( \cong H^\star(G/P_J,\ZZ) \right) 
$$
Thus $H^\star(G/P_J,\ZZ)$ has $\ZZ$-basis given by 
$$
\{ \sigma_w: w \in W^J \}
  = \{s_\lambda: \lambda \subseteq (n-r)^r \}.
$$

\subsection{A shorter presentation in type $A$}
\label{type-A-generators-subsection}

Starting with Theorem~\ref{general-J-generators}, our goal is to find
an even smaller set of generators for the ideal $J_{v_{r,s,t,n}}$
within the coinvariant algebra, so that through (\ref{eqn:I_wconcat})
we obtain an even shorter presentation of $I_w$ in type $A$.

First note that even though our proof of Theorem~\ref{general-J-generators}
for all finite Coxeter groups was done
in $\Bbbk[V]/(\Bbbk[V]^W_+)$ where the field $\Bbbk$ has characteristic zero,
the same proof works in type $A$ more generally for the {\it integral}
coinvariant algebra $H^\star(G/B,\ZZ)$. This follows since
the Schubert polynomials $\{\schubp_w\}$ satisfy the integer
coefficient versions of all of the requisite properties (a)-(g) used in Section~\ref{Hiller-section}.

The bigrassmannian $v_{r,s,t,n}$ described explicitly in \eqref{v-explicitly}
is $r$-grassmannian, and corresponds to the $j \times i$ 
rectangular partition $i^j$, where we define
$$
\begin{aligned}
i&:=s-t+1,\\
j&:=r-t+1.
\end{aligned}
$$

When $u$ and $v$ are $r$-grassmannian and correspond respectively to
partitions $\lambda$ and $\mu$, the Bruhat order relation $u \geq v$
is equivalent to inclusion $\lambda \supseteq \mu$ of their Young
diagrams, meaning that $\lambda_i \geq \mu_i$ for all $i$.  Thus
Theorem~\ref{general-J-generators} says that $J_{v_{r,s,t,n}}$ is
generated as an ideal of $\Lambda/I_{r,n-r}$ by
\begin{equation}
\label{step0-generators}
\{\schubp_u: \Des(u) = \{(r,r+1)\}, \,\, u \geq v_{r,s,t,n}\}
=\{ s_{\mu}: i^j \subseteq \mu \subseteq (n-r)^r \}.
\end{equation}

This presentation from Theorem~\ref{general-J-generators} can be
improved in type $A_{n-1}$ as follows:

\begin{theorem}
\label{J-generators}
  Given a bigrassmannian $v=v_{r,s,t,n}$ in type $A_{n-1}$ with $\schubp_v = s_{i^j}$, let
$$
\begin{aligned}
a&:=\min(n-r-i,r-j)\\
b&:=\min(i,j).
\end{aligned}
$$

Then $J_v$ is generated as an ideal of 
$H^{\star}(G/P_J, {\mathbb Z})\cong \Lambda/I_{r,n-r}$ by 
\begin{equation}
\label{generating-set-one}
\{ s_{\mu}: i^j \subseteq \mu \subseteq ((i+a)^b,i^{j-b}) \}.
\end{equation}

Alternatively, $J_v$ is generated by 
\begin{equation}
\label{generating-set-two}
\{ s_{\mu}: i^j \subseteq \mu \subseteq (i^j,b^a) \}.
\end{equation}
\end{theorem}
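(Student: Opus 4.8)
The plan is to deduce Theorem~\ref{J-generators} from the generating set $\{s_\mu : i^j\subseteq\mu\subseteq (n-r)^r\}$ for $J_v$ provided by \eqref{step0-generators} (hence ultimately by Theorem~\ref{general-J-generators}), by showing that the much smaller set \eqref{generating-set-one} already suffices, and then obtaining \eqref{generating-set-two} formally by a transposition symmetry. For the latter, one uses that the standard involution $\omega$ of the ring of symmetric functions ($\omega(s_\lambda)=s_{\lambda'}$) carries $I_{r,n-r}$ onto $I_{n-r,r}$, hence induces a ring isomorphism $\Lambda/I_{r,n-r}\cong\Lambda/I_{n-r,r}\ (\cong H^\star(G(n-r,n),\ZZ))$. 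Under this isomorphism $J_{v_{r,s,t,n}}$ goes to $J_{v'}$, where $v'$ is the bigrassmannian with $\schubp_{v'}=s_{j^i}$; and since the quantities $a=\min(n-r-i,\,r-j)$ and $b=\min(i,j)$ are symmetric under the simultaneous interchange $i\leftrightarrow j$, $r\leftrightarrow n-r$, a direct computation of conjugate partitions shows that $\omega$ carries the set \eqref{generating-set-one} for $J_{v_{r,s,t,n}}$ exactly onto the set \eqref{generating-set-two} for $J_{v'}$ (the conjugate of $((i+a)^b,i^{j-b})$ is $(j^i,b^a)$). As $v\mapsto v'$ is a bijection on bigrassmannian elements, ``\eqref{generating-set-one} generates $J_v$ for every bigrassmannian $v$'' is equivalent to ``\eqref{generating-set-two} generates $J_v$ for every bigrassmannian $v$'', so it is enough to treat \eqref{generating-set-one}.

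Set $G:=((i+a)^b,\,i^{j-b})$, the ``outer shape'' of \eqref{generating-set-one}, and let $\mathfrak{a}$ be the ideal of $\Lambda/I_{r,n-r}$ generated by $\{s_\mu : i^j\subseteq\mu\subseteq G\}$ (the partitions $\mu$ occurring here are exactly $i^j$ with a partition fitting in a $b\times a$ box glued to the first $b$ rows). Since each such $\mu$ lies between $i^j$ and $(n-r)^r$, we have $\mathfrak{a}\subseteq J_v$, and the task is to prove $s_\nu\in\mathfrak{a}$ for every $\nu$ with $i^j\subseteq\nu\subseteq(n-r)^r$. I would argue by induction on the statistic $d(\nu):=\sum_k\max(0,\,\nu_k-G_k)$, the number of cells of $\nu$ lying outside $G$ (refined, where necessary, by the dominance order among partitions with a common value of $d$); the base case $d(\nu)=0$ is precisely the assertion that $s_\nu$ is one of the designated generators. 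For the inductive step one needs ``straightening'' identities: for $\nu$ poking out of $G$, an expression $s_\nu=(\text{element of }\mathfrak a)\cdot(\text{something}) - \sum (\text{simpler }s_{\nu'})$ in which every $\nu'$ still contains $i^j$ and is strictly smaller in the induction order, so that the induction hypothesis and the fact that $\mathfrak a$ is an ideal finish the argument. The raw material is: the Pieri rules for multiplying a Schubert class by the special classes $h_p$ or $e_p$; the (multiplicity-free) Littlewood--Richardson rule for multiplying by the rectangular class $s_{i^j}$; and, crucially, the relations $e_{r+1}=\dots=0$ and $h_{n-r+1}=\dots=0$ holding in $H^\star(G/P_J,\ZZ)=\Lambda/I_{r,n-r}$, which are what allow cells that would leave the $r\times(n-r)$ bounding rectangle to be traded for cells elsewhere. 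The simplest instance of the whole phenomenon is that these relations already give $\mathfrak m=(h_1,\dots,h_{\min(r,n-r)})$ for the augmentation ideal $\mathfrak m$ of $H^\star(G(r,n))$, which is exactly Theorem~\ref{J-generators} in the case $i=j=1$.

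The main obstacle is devising the straightening identities and checking that the recursion actually closes. A partition $\nu$ can exceed $G$ horizontally (a row $\le j$ reaching past column $i$, or, when $a=r-j$, a row $\le b$ reaching past column $i+a$) and vertically (a nonempty row below row $j$), and which of these occur interacts with the shape of $G$, i.e.\ with whether $a=n-r-i$ or $a=r-j$ and whether $b=i$ or $b=j$; this forces a case analysis. In each case the identity is assembled by applying Pieri moves to $s_{\min(\nu,G)}$ (or to an already-handled partition) and solving for $s_\nu$, then reducing modulo $I_{r,n-r}$. The delicate point is precisely the verification that every surviving error term $s_{\nu'}$ after this reduction both continues to lie in $J_v$ (i.e.\ $\nu'\supseteq i^j$) and is strictly simpler in the chosen order --- the naive ``leading term'' bookkeeping fails because Littlewood--Richardson products preserve total degree, so the reduction in complexity must come entirely from $d$ (cells migrating into $G$) and from cancellations supplied by the Grassmannian relations, and controlling the terms at the same value of $d$ is where the dominance refinement and the bulk of the combinatorial work reside.
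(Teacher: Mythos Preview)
Your overall strategy coincides with the paper's: reduce \eqref{generating-set-two} to \eqref{generating-set-one} via the involution $\omega$ (your conjugation computation is correct), start from the generating set \eqref{step0-generators} supplied by Theorem~\ref{general-J-generators}, and then straighten each $s_\mu$ into the ideal $\mathfrak a$ using Pieri-type identities together with the Grassmannian relations. What differs is the execution, and the difference matters because, as you yourself acknowledge, you have not actually produced the straightening identities nor checked that your recursion closes.

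The paper inducts on the \emph{total degree} $|\mu|$, not on your statistic $d(\nu)$. This is the decisive choice: at each step the paper writes $s_\mu$ as a $\Lambda_+$-multiple of some $s_\nu$ with $\nu\supseteq i^j$ and $|\nu|<|\mu|$, plus same-degree error terms controlled by an auxiliary dominance-type order; the factor $s_\nu$ is then already in $\mathfrak a$ by the degree induction, \emph{regardless of whether $\nu$ lies inside $G$}. Under your $d(\nu)$ induction one would instead need every such auxiliary $\nu$ to have strictly smaller $d$, and your proposed move ``apply Pieri to $s_{\min(\nu,G)}$'' does not visibly guarantee this. Concretely, the paper carries out the reduction in three nested stages, shrinking the outer shape from $(n-r)^r$ to $(n-r)^j$ (Pieri with $e_k$, inner induction on a partial order $\prec_j$ on the part of $\mu$ below row $j$), then to $(i+a)^j$ when $a=r-j$ (an explicit identity expressing $s_{(\nu,1^k)}$ as an alternating sum $\sum_\ell (-1)^\ell e_{k-\ell}\sum_\lambda s_\lambda$, which vanishes modulo $I_{r,n-r}$ once $(\nu,1^k)$ has more than $r$ parts), and finally to $((i+a)^b,i^{j-b})$ when $b=i<j$ (a Jacobi--Trudi cofactor expansion writing $s_\mu=\sum_m(-1)^{k-m}h_{\mu_m+k-i-m}\,s_{\mu^{(m)}}$ with each $\mu^{(m)}\supseteq i^j$ and each $h$-subscript strictly positive). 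These two explicit symmetric-function lemmas are precisely the missing substance of your plan; once you switch to degree induction and supply them, your outline becomes the paper's proof.
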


We delay our proof of Theorem~\ref{J-generators} until Section~\ref{minconjsection}.

\noindent
Note that in both of the asserted generating sets \eqref{generating-set-one} and \eqref{generating-set-two} for $J_v$,
the shapes $\mu$ indexing the generators $s_\mu$ run through an interval between
the $j \times i$ rectangular shape $i^j$ and the disjoint union of $i^j$ with a 
smaller rectangle of shape $a \times b$ or $b \times a$. 
In \eqref{generating-set-one} the smaller rectangle is to the right
of the rectangle $i^j$, with both top-justified, while in \eqref{generating-set-two} the 
smaller rectangle is below the rectangle $i^j$, with both left-justified.  
Thus in both cases, the generating sets have size $\binom{a+b}{a}$
and consist of generators whose multiset of degrees
have generating function 
$$
\sum_\mu q^{|\mu|} = q^{ij} \qbin{a+b}{a},
$$
where $\qbin{a+b}{a}$ is a $q$-{\it binomial coefficient} or {\it Gaussian polynomial} 
\cite[I.2 Exer. 3]{Macdonald}.

\begin{example}
\label{GR-comparison}
We examine the special case where the bigrassmannian 
$v:=v_{r,s,t,n}$ has $t$ equal to $r$ or $s$, so that the Schubert condition $C_{r,s,t}$
in \ref{Schubert-condition} becomes an {\it inclusion} 
$V_r \subseteq \CC^s$ or $V_r \supseteq \CC^s$.
Schubert varieties $X_w$ in type $A$ for which 
all Schubert conditions on $X_w$ take one of these two forms
were called Schubert varieties {\it defined by inclusions} in
\cite{GasharovReiner}.  That paper gave a presentation for the cohomology
containing
\begin{enumerate}
\item[$\bullet$]
for each inclusion condition $V_r \supseteq \CC^s$, 
a set of $s$ generators for $J_v$ of the form 
\begin{equation}
\label{first-kind-of-inclusion}
\{ e_m(x_1,\ldots,x_r) \}_{m=r-s+1,r-s+2,\ldots,r},
\end{equation}
\item[$\bullet$]
and for each inclusion condition $V_r \subseteq \CC^s$, 
a set of $r$ generators for $J_v$ of the form 
\begin{equation}
\{ e_m(x_r+1,\ldots,x_n) \}_{m=s-r+1,s-r+2,\ldots,n-r}.
\end{equation}
\end{enumerate}
We compare this with the presentation for $J_v$ 
in Theorem~\ref{J-generators},
say for the inclusion conditions of the form 
$V_r \supseteq \CC^s$, and using the 
generators given in \eqref{generating-set-two}.

Since $t=s$, one has 
$$
\begin{aligned}
i&=s-t+1=1\\
j&=r-s+1\\
b&=\min(i,j)=i=1\\
a&=\min(n-r-1,s-1)
\end{aligned}
$$
Hence  \eqref{generating-set-two} says
that $J_v$ is generated by the set of $a+1$ Schur functions 
$$
\{ s_\mu: 1^{r-s+1} \subseteq \mu \subseteq 1^{a+r-s+1} \}
=\{ e_m(x_1,\ldots,x_r) \}_{m=r-s+1,r-s+2,\ldots,a+r-s+1}
$$
which is exactly the first $a+1=\min(n-r,s)$ out of the $s$ generators 
listed in \eqref{first-kind-of-inclusion}.  Hence Theorem~\ref{J-generators}
provides a dramatic reduction in the size of the 
generating set for $J_v$ whenever $n-r$ is small compared to $s$.

We remark also that the techniques utilized in \cite{GasharovReiner}
seem very particular to the case where $X_w$ is defined by inclusions.
We do not know how to use them for some alternate approach
to the case of general $X_w$ considered in this paper.
\end{example}

\subsection{A minimality conjecture}
\label{minconj}
As we shall see in a moment, 
our generators for $I_w$ are not minimal. However,
we believe the following holds:

\begin{conjecture}
\label{minimality-conjecture}
The two generating sets for the ideal $J_{v_{r,s,t,n}}$ given in Theorem~\ref{J-generators}
are both minimal. 
\end{conjecture}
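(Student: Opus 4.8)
The plan for attacking Conjecture~\ref{minimality-conjecture} is to compute, degree by degree, the number of minimal ideal generators of $J_v$ and to match it against the number $\binom{a+b}{a}$ of shapes appearing in either generating set of Theorem~\ref{J-generators}. Since $J_v$ is a homogeneous ideal of the graded ring $R=H^\star(G/P_J)\cong\Lambda/I_{r,n-r}$, working over a field $\Bbbk$ the graded Nakayama lemma identifies the number of degree-$d$ minimal generators with $\dim_\Bbbk(J_v/\mathfrak m J_v)_d$, where $\mathfrak m=R_+$; the statement over $\ZZ$ then follows formally, because the generating sets of Theorem~\ref{J-generators} are integral and a generating set whose cardinality equals the minimal number of generators admits no proper generating subset. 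Theorem~\ref{J-generators} already supplies the inequality $\dim_\Bbbk(J_v/\mathfrak m J_v)_d\le\#\{\tau\subseteq b^a:|\tau|=d-ij\}$, so the entire content is the reverse inequality.

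Next I would make $\mathfrak m J_v$ completely explicit. Because $R$ is generated as a ring by the special Schubert classes $s_{(1)},\dots,s_{(n-r)}$, one has $\mathfrak m J_v=\sum_{k\ge 1}s_{(k)}J_v$, so by the Pieri rule $(\mathfrak m J_v)_d$ is spanned by the vectors $\sum_\kappa s_\kappa$, one for each pair $(\nu,k)$ with $i^j\subseteq\nu\subseteq(n-r)^r$ and $1\le k\le d-|\nu|$, the sum ranging over all $\kappa\subseteq(n-r)^r$ obtained from $\nu$ by adding a horizontal strip of size $k$. Dually, a linear functional on $(J_v)_d$ annihilates $(\mathfrak m J_v)_d$ exactly when it satisfies the corresponding system of ``horizontal-strip-sum equals zero'' equations. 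To prove the reverse inequality it therefore suffices to attach to each generator-shape $\mu$ of degree $d$ a functional $\phi_\mu$ solving this system with $\phi_\mu(s_\mu)\ne 0$ and $\phi_\mu(s_{\mu'})=0$ for every other degree-$d$ generator-shape $\mu'$; such a triangular family is automatically linearly independent modulo $\mathfrak m J_v$.

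A warning: one cannot take $\phi_\mu$ to be the dual-basis functional $s_\mu^\star$ (equivalently, under Poincar\'e duality on $H^\star(G/P_J)$, the pairing against the box-complementary class $s_{\mu^\vee}$) except when $\mu=i^j$. Indeed, writing a generator-shape as $\mu=(i^j,\tau)$, any nonempty $\tau$ has a corner box lying in a row below the $j$-th, which can be peeled off as a vertical strip without disturbing the rows of length $i$; since complementation in the box interchanges horizontal and vertical strips, this exhibits a horizontal-strip relation on which $s_\mu^\star$ does not vanish. The remedy, I expect, is to build $\phi_\mu$ as a signed combination $\sum_\kappa\varepsilon_{\mu,\kappa}\,s_\kappa^\star$ whose sign pattern is the alternating, determinantal one produced by the same Jacobi--Trudi-type and flagged-Schur identities used to prove Theorem~\ref{J-generators} in Section~\ref{type-A-section}; the telescoping cancellations built into such alternating sums are precisely what forces them to kill every horizontal-strip relation at once. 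One then checks the triangularity $\phi_\mu(s_{\mu'})=\delta_{\mu,\mu'}$ on generator-shapes, which finishes the argument, and the assertion for the generating set \eqref{generating-set-one} follows by reading everything through the box-complement.

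The main obstacle will be this last combinatorial step: pinning down the correct alternating sums $\phi_\mu$ and proving simultaneously that they annihilate all horizontal-strip relations and remain triangular on generator-shapes. Equivalently, one must compute $\dim_\Bbbk(\mathfrak m J_v)_d$ on the nose, and this is delicate because $\mathfrak m J_v$ is genuinely not spanned by any subset of the Schur basis of $J_v$: already $s_{(1)}s_{i^j}=s_{(i+1,i^{j-1})}+s_{(i^j,1)}$ is an irreducible two-term relation, so no term-by-term count is available. If the direct route proves intractable, an alternative is to reinterpret $\dim_\Bbbk(J_v/\mathfrak m J_v)$ as $\dim_\Bbbk\mathrm{Tor}^R_1(R/J_v,\Bbbk)$, the first total Betti number of the cohomology ring of the Grassmannian Schubert variety with cohomology basis $\{s_\lambda:\lambda\not\supseteq i^j\}$ (namely $X_\nu$ with $\nu=((n-r)^{j-1},(i-1)^{r-j+1})$), and to compute it from a Bott--Samelson or flagged-tableau resolution; but I would try the explicit-functional approach first, since it stays within the symmetric-function toolkit already developed for Theorem~\ref{J-generators}.
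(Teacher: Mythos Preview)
This statement is a \emph{conjecture} in the paper, not a theorem: the authors explicitly write ``So far a proof has eluded us,'' and the only evidence offered in the paper proper is computer verification for $r\le 4$ and $n-r\le 5$. There is therefore no proof in the paper to compare against. Your submission is likewise not a proof but a strategy outline, and you are candid about this: you identify the crux as constructing the functionals $\phi_\mu$ and verifying simultaneously that they kill all horizontal-strip relations and are triangular on generator-shapes, and you say plainly that this step is ``the main obstacle.'' So neither side has a proof, and the honest status of your write-up matches the honest status of the paper.

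That said, it is worth recording how your proposed line of attack differs from the partial approach the authors sketch in their working notes (appended after the bibliography in the source). Their idea is inductive on the parameters: reduce to the case $a=r-j=n-r-i$, then set up a short complex
\[
0 \to A \to B \to C \to 0
\]
with $A$ the ``full first column'' submodule and $C$ a smaller instance of the same problem, and apply a generator-counting lemma (their Corollary on short complexes) provided one can verify $\Lambda_+ B \cap A = \Lambda_+ A$. This last equality is exactly where their argument stalls. Your approach is instead direct and dual: compute $\dim_\Bbbk(J_v/\mathfrak m J_v)_d$ by exhibiting enough linearly independent functionals on $(J_v)_d$ that vanish on $(\mathfrak m J_v)_d$. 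Both approaches reduce to a statement about $\Lambda_+ J_v$ that neither side can currently prove; yours has the virtue of being self-contained in each degree, while theirs leverages induction but requires controlling an intersection of submodules.

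One concrete caution about your plan: the functionals $\phi_\mu$ you hope to build as signed sums $\sum_\kappa \varepsilon_{\mu,\kappa} s_\kappa^\star$ must be supported on shapes $\kappa$ with $|\kappa|=d$ and $i^j\subseteq\kappa\subseteq(n-r)^r$, and must vanish on \emph{every} Pieri sum $\sum s_\kappa$ arising from \emph{any} $\nu\supseteq i^j$ with $|\nu|<d$, not just those $\nu$ that are themselves generator-shapes. The Jacobi--Trudi-type identities in the paper (Lemmas~\ref{Woo-lemma} and~\ref{Woo-2nd-lemma}) were designed to rewrite individual $s_\mu$ modulo $\mathfrak m J_v$, which is the forward direction; it is not clear that the same sign patterns, read backwards, produce functionals annihilating the full space of relations. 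This is the genuine gap, and you have correctly flagged it.
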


Via computer, we have verified this conjecture for all bigrassmannian permutations
where $r\leq 4$ and $n-r\leq 5$.

In fact, Conjecture~\ref{minimality-conjecture} indicates obstructions to short presentations of
$H^{\star}(X_w)$ in general. We now give 
a family of ideals that would require a large number of generators if the conjecture is true.

For a positive integer $m$, let $n=4m$, and consider in
$W=S_n=S_{4m}$ the bigrassmannian $v_{r,s,t,n}$ that corresponds
to $r=n-r=2m$ and $i=j=m$.  Then $a=b=m$, and $J_{v_{r,s,t,n}}(=I_{w_{r,s,t,n}})$ requires 
\begin{equation}
\label{eqn:badlowerbnd}
\binom{2m}{m} \sim \frac{4^m}{\sqrt{\pi m}} = \frac{\sqrt{2}^{n+2}}{\sqrt{\pi n}}
\end{equation}
generators according to Conjecture~\ref{minimality-conjecture}. 

The size of any minimal generating set of a homogeneous ideal is
well-defined. This is implied by the following well-known fact:

\begin{proposition} 
\label{prop:commalgfact}
Let $R$ be a commutative ring, and $\Lambda=\oplus_{n \geq 0}
\Lambda_n$ a graded, connected $R$-algebra, meaning that $\Lambda_0
=R$ and $\Lambda_i \Lambda_j \subset \Lambda_{i+j}$.  Let $M$ be a
graded $\Lambda$-module, with degrees bounded below, meaning that
$M=\oplus_{n \geq N} M_n$ for some $N \in \ZZ$, and $\Lambda_i M_j
\subset M_{i+j}$.

Then a set of homogeneous elements $\{m_i\}_{i=1}^t$ generate $M$ as a $\Lambda$-module
if and only if their images $\{\bar{m_i}\}_{i=1}^t$ span $M/\Lambda_+ M$ as a $R$-module.
In particular, $\{m_i\}_{i=1}^t$ form a minimal $\Lambda$-generating set with respect to
inclusion for $M$ if and only if $\{\bar{m_i}\}_{i=1}^t$ form a minimal $R$-spanning
set for $M/\Lambda_+ M$.  
\end{proposition}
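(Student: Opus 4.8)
The plan is to prove Proposition~\ref{prop:commalgfact}, which is a version of the graded Nakayama lemma. I would first reduce the ``in particular'' statement to the first assertion, since once we know that a homogeneous set generates $M$ as a $\Lambda$-module if and only if its image spans $M/\Lambda_+M$ over $R$, minimality with respect to inclusion on the two sides corresponds: a generating set $\{m_i\}$ is non-minimal exactly when some $m_{i_0}$ can be dropped, i.e.\ $\{m_i\}_{i\neq i_0}$ still generates $M$, which by the first assertion is equivalent to $\{\bar m_i\}_{i\neq i_0}$ still spanning $M/\Lambda_+M$. So the whole content is the first ``if and only if''.

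For the first assertion, the forward direction is immediate: if $\{m_i\}$ generates $M$, then reducing mod $\Lambda_+M$ shows the $\bar m_i$ span $M/\Lambda_+M$ over $\Lambda_0=R$, since any $\Lambda$-linear combination becomes an $R$-linear combination modulo $\Lambda_+M$. For the reverse direction, suppose the $\bar m_i$ span $M/\Lambda_+M$ over $R$; let $M'=\sum_i \Lambda m_i$ be the submodule they generate, so $M'+\Lambda_+M = M$. I would prove $M'_d = M_d$ by induction on the degree $d$, starting from the bottom degree $N$ (this is where the ``degrees bounded below'' hypothesis is essential). For $d=N$: $\Lambda_+M$ has no component in degree $N$ because $\Lambda_+M = \sum_{i>0}\Lambda_i M$ and $M_j=0$ for $j<N$, so $(\Lambda_+M)_N=0$ and hence $M_N = M'_N$. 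For the inductive step, assume $M'_e = M_e$ for all $e<d$. From $M = M'+\Lambda_+M$ we get $M_d = M'_d + (\Lambda_+M)_d$, and $(\Lambda_+M)_d = \sum_{i>0}\Lambda_i M_{d-i} = \sum_{i>0}\Lambda_i M'_{d-i} \subseteq M'_d$ by the inductive hypothesis; therefore $M_d = M'_d$, completing the induction. Hence $M'=M$, i.e.\ the $m_i$ generate $M$.

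The only mild subtlety—and the step I would be most careful about—is making sure the degree induction is set up correctly: one needs $\Lambda$ connected ($\Lambda_0=R$, so that $R$-spanning in the quotient is the right notion) together with the lower bound $N$ on the degrees of $M$, so that the bottom of the induction is not vacuous and $(\Lambda_+M)_d$ genuinely only involves strictly lower degrees of $M$. No finiteness hypothesis on the generating set or on $R$ is needed, and no blank-line pitfalls arise since the argument is essentially word-based. I would present it as a short self-contained proof citing it as the standard graded Nakayama lemma, perhaps with a pointer to \cite{Stanley-EC2} or a commutative algebra reference, but the proof above is short enough to include in full.
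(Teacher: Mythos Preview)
Your proof is correct and follows essentially the same approach as the paper's: both reduce the minimality statement to the first assertion, dispatch the forward direction by reducing mod $\Lambda_+ M$, and handle the converse by induction on degree using the lower bound $N$ to start the induction. The only cosmetic difference is that you organize the induction around the submodule $M'=\sum_i \Lambda m_i$ and show $M'_d=M_d$ degree by degree, whereas the paper argues element-by-element (writing $m=\sum c_i m_i + \sum \lambda_j m'_j$ with each $m'_j$ of strictly smaller degree); these are the same argument.
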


In our setting, the well-definedness follows by setting
$\Lambda=\oplus_{n\geq 0}\Lambda_{n}$ to be the graded ring of
symmetric functions with ${\mathbb Z}$ coefficients and setting
$M=J_{v_{r,s,t,n}}$, so that the ${\mathbb Z}$-module $M/\Lambda_{+}M$
is a finitely generated abelian group.
%
%
Thus we conjecture that this abelian group $M/ \Lambda_+ M$ requires
$\binom{a+b}{a}$ generators, and in fact, we suspect that
$M/\Lambda_{+}M\cong {\mathbb Z}^{\binom{a+b}{a}}$.  So far a proof
has eluded us.

\begin{example}
\label{non-minimal-example}
Since $I_w=\sum_{v \in \Ess(w)} J_v$, and since we have
conjectured that the generating sets provided by Theorem~\ref{J-generators}
for $J_v$ are minimal, one might wonder whether their
concatentation gives a minimal generating set of $I_w$.  As mentioned above, this turns
out to be false in general.

The smallest counterexample is given by $w=1243$, which has
$$
\Ess(w) = \{v_1 = 2134, v_2=1324\}
$$
The generating sets given in Theorem~\ref{J-generators} for the ideals $J_{v_1}$ and $J_{v_2}$ are
$$
\begin{aligned}
J_{v_1} &= \langle s_{(1)}(x_1) \rangle 
       = \langle x_1 \rangle \\
       & \\
J_{v_2} &= \langle s_{(1)}(x_1,x_2), \quad s_{(2)}(x_1,x_2) \rangle 
           \quad \text{ or } \quad \langle s_{(1)}(x_1,x_2), \quad s_{(1,1)}(x_1,x_2) \rangle  \\
       &= \langle x_1+x_2, \quad x_1^2+x_1 x_2+x_2^2  \rangle
            \text{ or }\langle  x_1+x_2, \quad x_1 x_2 \rangle,
\end{aligned}
$$
and in each case they minimally generate their ideals $J_{v_i}$.
However, concatenating them gives non-minimal generating sets for $I_w$, namely
$$
\begin{aligned}
I_w = J_{v_1}+J_{v_2} &= \langle x_1, \quad x_1+x_2,\quad x_1^2+x_1 x_2+x_2^2 \rangle 
                      \quad \text{ or } \quad \langle x_1, \quad x_1+x_2, \quad x_1 x_2 \rangle  \\
                       &   \left( = \langle x_1, \quad  x_1+x_2 \rangle \right) .
\end{aligned}
$$
\end{example}

\begin{example}
Some readers may find the above earliest example artificial: 
though $X_w=X_{1243}$ lives inside $GL_4/B$, it is isomorphic to
$X_{21}=GL_2/B \cong \Proj^2$. However one can easily produce from
this more counterexamples with similar properties but no such
artificial nature.

For example, take $w=23541$, which has $\Ess(w) = \{v_1 = 31245, v_2=14235\}$.
Then $J_{v_1}$ and $J_{v_2}$ require one and two generators respectively, 
but the sum $I_w=J_{v_1}+J_{v_2}$ requires only two generators, not three.
\end{example}

\subsection{Some symmetric function identities}

The proof of Theorem~\ref{J-generators} on generators for $J_{v_{r,s,t,n}}$ will
use some symmetric function identities which we describe and prove in this
section.  We will make use of standard terminology, such as in
\cite{Macdonald-symm-fns, Sagan, Stanley-EC2}.

In particular, we will use the {\bf Pieri rule} expanding 
the product of an elementary symmetric function $e_r:=s_{1^r}$ with an
arbitrary Schur function into Schur functions:
\begin{equation}
\label{Pieri-rule}
e_k s_\lambda = \sum_\mu s_\mu,
\end{equation}
where the sum runs over all partitions $\mu$ obtained from $\lambda$ by adding on
a vertical strip of length $k$.  The following easy consequence
will be used in the proof of Theorem~\ref{J-generators} below.

\begin{lemma}
\label{Woo-lemma}
For any partition $\nu$ and nonnegative integer $k$, one has
\begin{equation}
\label{eqn:Woo-lemma}
s_{(\nu,1^k)} = \sum_{\ell=0}^k (-1)^{\ell} e_{k-\ell} \sum_\lambda s_\lambda,
\end{equation}
where the inner sum runs over partitions $\lambda$ 
obtained from $\nu$ by adding a horizontal strip of length~$\ell$.
\end{lemma}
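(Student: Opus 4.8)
The plan is to prove Lemma~\ref{Woo-lemma} by induction on $k$, using the Pieri rule \eqref{Pieri-rule} in the form that $e_{k-\ell}s_\lambda$ expands over partitions obtained from $\lambda$ by adding a vertical strip of size $k-\ell$. The base case $k=0$ is trivial, since both sides equal $s_\nu$. For the inductive step, I would isolate the $\ell=k$ term on the right-hand side of \eqref{eqn:Woo-lemma}, which is simply $s_\nu$, and rewrite the remaining terms using the identity for $k-1$: the alternating sum $\sum_{\ell=0}^{k-1}(-1)^\ell e_{k-\ell}\sum_\lambda s_\lambda$ (inner sum over horizontal strips of length $\ell$ added to $\nu$) differs from $e_1 \cdot s_{(\nu,1^{k-1})}$ only in that the Pieri expansion of the latter carries an extra factor. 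More precisely, I would aim to show directly that $e_1 \cdot s_{(\nu,1^{k-1})} = s_{(\nu,1^k)} + (\text{the terms we want to cancel})$.

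The cleaner route is probably this: apply Pieri in reverse. Multiply the conjectured right-hand side by nothing, and instead observe that $e_k = e_1 e_{k-1} - (\text{correction})$ is messier than desired; so instead I would compute $\sum_{\ell=0}^{k}(-1)^\ell e_{k-\ell}\sum_{\lambda}s_\lambda$ by first doing the outermost Pieri multiplications. A vertical strip of size $k-\ell$ added to a shape $\lambda$ that was itself obtained from $\nu$ by adding a horizontal strip of size $\ell$: I want to argue that almost all such two-step additions cancel in pairs across consecutive values of $\ell$, leaving only the single shape $(\nu,1^k)$. The sign $(-1)^\ell$ is exactly what produces the cancellation. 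Concretely, given a shape $\mu$ appearing in the total sum, I would count with sign the number of ways to write $\mu = \nu + (\text{horizontal }\ell) + (\text{vertical }k-\ell)$; this is a signed count over $\ell$, and the claim is that it is $1$ when $\mu = (\nu,1^k)$ and $0$ otherwise. This is a standard sign-reversing-involution argument: on the set of pairs (intermediate shape $\lambda$, final shape $\mu$) with $\mu \neq (\nu,1^k)$, one constructs an involution that toggles one box between "being part of the horizontal strip" and "being part of the vertical strip," changing $\ell$ by $\pm 1$ and hence flipping the sign.

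The main obstacle will be setting up the sign-reversing involution cleanly and checking it is well-defined and fixed-point-free off the distinguished term. The natural candidate: given a valid two-step decomposition producing $\mu \neq (\nu,1^k)$, look at the boxes of $\mu/\nu$; the horizontal strip and vertical strip together tile $\mu/\nu$ with $\ell$ boxes marked "H" and $k-\ell$ marked "V," subject to the strip conditions. Find a canonical box (say, the topmost-then-leftmost box of $\mu/\nu$ not in the first column, or some such) whose H/V label can be flipped while preserving validity of both strip conditions; flipping it gives the partner. One must verify that when no such box exists, $\mu/\nu$ must be a single column of height $k$, i.e. $\mu=(\nu,1^k)$, and that in that unique case only $\ell=0$ contributes. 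Alternatively — and this may be the shorter write-up — I would avoid the involution entirely and instead induct, using that $s_{(\nu,1^k)}$ satisfies the Jacobi--Trudi-type recursion $s_{(\nu,1^k)} = e_1 s_{(\nu,1^{k-1})} - (\text{sum of }s_\lambda\text{ over }\lambda\text{ obtained from }\nu\text{ by adding one box, with a tail }1^{k-1})$, which is again just Pieri applied to $e_1 \cdot s_{(\nu,1^{k-1})}$ and then solved for the term of interest; substituting the inductive hypothesis for $s_{(\nu,1^{k-1})}$ and simplifying the telescoping alternating sum yields \eqref{eqn:Woo-lemma}. I expect the bookkeeping in this telescoping — matching "add a box to $\nu$ then add a horizontal strip" against "add a horizontal strip to $\nu$ directly" — to be the only real work, and it is routine.
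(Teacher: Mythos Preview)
Your approach is essentially the paper's: expand the right-hand side via the Pieri rule and cancel everything except the single term $s_{(\nu,1^k)}$ by a sign-reversing involution on pairs (horizontal $\ell$-strip, vertical $(k-\ell)$-strip). The paper's involution is more explicit than your sketch: with $x$ the farthest-east box of the horizontal strip and $y$ the farthest-north box of the vertical strip, one moves $y$ into the horizontal strip when $y$ lies to the right of $x$ (or when $\ell=0$ and the shape is not $(\nu,1^k)$), and moves $x$ into the vertical strip when $y$ lies below $x$. Your ``topmost-then-leftmost flippable box'' is the right spirit but would need to be replaced by something like this to make the involution provably well-defined.

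One genuine subtlety you should not skip: your claim that ``only $\ell=0$ contributes'' for $\mu=(\nu,1^k)$ is \emph{false} as the statement is literally written. If horizontal strips are allowed to occupy a new row below $\nu$, then $\mu=(\nu,1^k)$ also admits the decomposition with $\ell=1$ (horizontal strip $=$ the single box in row $\ell(\nu)+1$, vertical strip $=$ the remaining $k-1$ boxes below it), and these two terms cancel. For instance, with $\nu=(1)$ and $k=1$ the right-hand side of \eqref{eqn:Woo-lemma} is $e_1 s_{(1)} - (s_{(2)}+s_{(1,1)}) = 0$, not $s_{(1,1)}$. The paper's proof silently repairs this by restricting the horizontal $\ell$-strip to lie within the first $\ell(\nu)$ rows; with that restriction the involution does have $(\ell,\lambda)=(0,(\nu,1^k))$ as its unique fixed point, and the identity holds. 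You would discover this immediately upon trying to verify the fixed-point count, but it is worth flagging in advance rather than listing it as ``routine.'' Your alternative inductive route via $e_1\cdot s_{(\nu,1^{k-1})}$ runs into exactly the same issue and needs the same restriction.
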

\begin{proof}
Using the Pieri rule \eqref{Pieri-rule} to expand the right side 
of (\ref{eqn:Woo-lemma}), one obtains
$$
\sum_{\ell=0}^k (-1)^{\ell} e_{k-\ell} \sum_\lambda s_\lambda=
\sum_{(\ell,\lambda)}(-1)^\ell s_\lambda,
$$
where the sum runs over pairs $(\ell,\lambda)$ in which both $0 \leq
\ell \leq k$, and $\lambda$ is obtained from $\nu$ by first adding a
horizontal $\ell$-strip within the first $\ell(\nu)$ rows then adding
an arbitrary vertical $(k-\ell)$-strip.  Cancel all these pairs,
except for the one with $\ell=0$ and $\lambda=(\nu,1^k)$, via the
following sign-reversing involution: if $x$ (respectively $y$) is the
farthest east (respectively, farthest north) box in the horizontal
(respectively vertical) strip, then
\begin{enumerate}
\item[$\bullet$] when $y$ is to the right of $x$ (or when $\ell=0$ and
$\lambda \neq (\nu,1^k)$), move $y$ from the vertical to the
horizontal strip, and,
\item[$\bullet$] when $y$ is below $x$, move $x$ from the horizontal strip
to the vertical strip.
\end{enumerate}
\end{proof}

%
%
%
%

We also need the {\bf Jacobi-Trudi identity}: 
\begin{equation}
\label{Jacobi-Trudi}
s_\lambda = \det( h_{\lambda_i - i + j}  )_{i,j=1,2,\ldots,\ell(\lambda)}
\end{equation}
with the usual convention that $h_r := s_{(r)}$ for $r \geq 0$ and
$h_r = 0$ for $r < 0$.  This has the following consequence, also
to be used in the proof of Theorem~\ref{J-generators} below.

\begin{lemma}
\label{Woo-2nd-lemma}
Let $i < k$, and assume $\mu$ is a partition with
$
\mu_k > i \geq \mu_{k+1},
$
so that the $(i+1)^{st}$ column of the Young diagram for $\mu$ has length $k > i$.
Then
$$
s_\mu = \sum_{m=1}^k (-1)^{k-m} \,\, h_{\mu_m+k-i-m} \,\, s_{\mu^{(m)}},
$$
where for $m=1,2,\ldots,k$ one defines
$$
\mu^{(m)}:=(\mu_1,\mu_2,\ldots,\mu_{m-1},\widehat{\mu_m}, \mu_{m+1}-1,\mu_{m+2}-1,
               \ldots,\mu_k-1,i,\mu_{k+1},\mu_{k+2},\ldots,\mu_{\ell}),
$$
where $\ell:=\ell(\mu)$ and $\widehat{\mu_m}$ refers to the deletion of
the entry $\mu_m$.
%
\end{lemma}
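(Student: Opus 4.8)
The statement to prove is Lemma~\ref{Woo-2nd-lemma}, which asserts a Laplace-type expansion of $s_\mu$ along the first column of a Jacobi-Trudi determinant after a change of variables keyed to the position $i+1$ of a column of length $k$. The plan is to start from the Jacobi-Trudi identity \eqref{Jacobi-Trudi} written for $s_\mu$ using the complete homogeneous symmetric functions $h$, then perform column operations to expose a column whose entries are $h_{\mu_m + k - i - m}$ for $m=1,\dots,k$ in the first $k$ rows and vanish below, and finally expand that single column by cofactors. Concretely, I would write the $\ell \times \ell$ Jacobi-Trudi matrix $M = (h_{\mu_a - a + c})_{a,c}$ and consider the entry structure forced by the hypothesis $\mu_k > i \ge \mu_{k+1}$: this says that $\mu_m - m \ge i - k + 1$ for $m \le k$ (so $\mu_m + k - i - m \ge 1 > 0$) while $\mu_m - m < i - k + 1$ for $m > k$ (so $\mu_m + k - i - m < 0$ gives $h_{\mu_m+k-i-m}=0$). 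Hence the vector $(h_{\mu_m + k - i - m})_{m=1}^{\ell}$ has its last $\ell-k$ entries equal to zero; this is precisely the $(k-i)$-th column shift of the matrix, i.e. column $c_0 := k - i$ if that index lies in $\{1,\dots,\ell\}$, or more robustly an integer-combination of existing columns obtained by the standard $h$-recursion.

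The key steps, in order: (1) verify the two inequalities above from $\mu_k > i \ge \mu_{k+1}$ and from the monotonicity $\mu_1 \ge \mu_2 \ge \cdots$; (2) identify the special column — I would argue that adding suitable $\ZZ$-multiples of columns $1,\dots,\ell$ produces a column equal to $\bigl(h_{\mu_m + k - i - m}\bigr)_{m}$ without changing the determinant, using that such combinations of $h$'s telescope; the cleanest route may instead be to recognize this column already as column $k-i$ of the Jacobi-Trudi matrix of $\mu$ itself when $1 \le k-i \le \ell$, and handle the boundary cases directly; (3) Laplace-expand $\det M$ along that column: the nonzero entries sit in rows $m=1,\dots,k$ with value $h_{\mu_m+k-i-m}$ and sign $(-1)^{m + c_0}$ relative to that column; (4) identify each resulting minor, the determinant obtained by deleting row $m$ and the special column, as the Jacobi-Trudi determinant of the partition $\mu^{(m)}$ defined in the statement — this is where the shifts $\mu_{m+1}-1,\dots,\mu_k-1$ and the inserted part $i$ come from, since deleting a row of the $h$-matrix and renormalizing indices is exactly the operation producing $\mu^{(m)}$; (5) collect the overall sign as $(-1)^{k-m}$ after absorbing the column-position sign.

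The main obstacle I anticipate is step (4): correctly matching the $(\ell-1)\times(\ell-1)$ minor, with its columns re-indexed after deletion of the chosen column, against the Jacobi-Trudi matrix of the specific partition $\mu^{(m)}$, including checking that $\mu^{(m)}$ is genuinely a partition (the claim $\mu_{m+1}-1 \ge \mu_{m+2}-1 \ge \cdots \ge \mu_k - 1 \ge i \ge \mu_{k+1}$ needs $\mu_k - 1 \ge i$, which holds since $\mu_k > i$ and $i$ is an integer) and that the row/column index bookkeeping in the Jacobi-Trudi formula survives deletion. A secondary technical point is tracking the signs precisely so that the final factor is $(-1)^{k-m}$ rather than an off-by-one; I would pin this down by checking the smallest case, say $k = i+1$ with $\ell = k$, against a direct two- or three-row determinant expansion. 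Everything else — the inequalities, the vanishing of $h$ at negative index, and the Laplace expansion itself — is routine once the column is correctly identified.
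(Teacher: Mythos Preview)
Your plan is on the right track in identifying column $k-i$ of the Jacobi--Trudi matrix as the column whose entries are $h_{\mu_m+k-i-m}$, with the entries vanishing for $m>k$. That part is fine. The gap is in step~(4): the $(\ell-1)\times(\ell-1)$ minors you obtain by deleting row~$m$ and column~$k-i$ from the $\ell\times\ell$ Jacobi--Trudi matrix are \emph{not} the Jacobi--Trudi determinants of $\mu^{(m)}$. Two obstructions: first, $\mu^{(m)}$ has $\ell$ parts (you removed $\mu_m$ but inserted the new part $i$), so its Jacobi--Trudi matrix is $\ell\times\ell$, not $(\ell-1)\times(\ell-1)$; second, deleting an interior column leaves non-consecutive column indices, which destroys the Jacobi--Trudi form. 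Your proposal offers no mechanism that produces the inserted part $i$ --- ``deleting a row and renormalizing'' yields parts $(\mu_1,\ldots,\mu_{m-1},\mu_{m+1}-1,\ldots,\mu_\ell-1)$, with no $i$ anywhere. A concrete test: take $\mu=(2,2)$, $k=2$, $i=1$. Your expansion along column $k-i=1$ gives $s_{(2,2)}=h_2\cdot h_2 - h_1\cdot h_3$, so the minors are $s_{(2)}$ and $s_{(3)}$, whereas $\mu^{(1)}=(1,1)$ and $\mu^{(2)}=(2,1)$.

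The paper's argument repairs exactly this point by augmenting rather than reducing. One inserts a new row between rows $k$ and $k+1$ with entries $(h_{i-k+1},\ldots,h_{i-k+\ell})$ --- this row is what carries the part $i$ --- and then appends an $(\ell+1)^{\text{st}}$ column duplicating column $k-i$, producing a singular $(\ell+1)\times(\ell+1)$ matrix. Cofactor expansion of its (zero) determinant along the duplicated column now gives $\ell\times\ell$ minors: the one from deleting the inserted row recovers the original Jacobi--Trudi matrix of $\mu$, and the one from deleting row $m\le k$ is, after the obvious reindexing, exactly the Jacobi--Trudi matrix of $\mu^{(m)}$ (rows $m+1,\ldots,k$ shift up, contributing $\mu_{m+1}-1,\ldots,\mu_k-1$; the inserted row lands in position $k$, contributing the part $i$; rows $k+1,\ldots,\ell$ are unchanged). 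The entries in the duplicated column below position $k+1$ vanish for the reason you already noted, and the sign bookkeeping gives $(-1)^{k-m}$.
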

\begin{proof}
Start with the $\ell \times \ell$
Jacobi-Trudi matrix for $\mu$.  From this create an $(\ell+1) \times \ell$ matrix by inserting a 
new row between its row $k$ and row $k+1$, having entries 
$$
(h_{i-k+1},h_{i-k+2},\ldots,h_{i-k+\ell}).
$$  
Then from this $(\ell+1) \times \ell$ matrix, create a singular $(\ell+1) \times (\ell+1)$ matrix 
by introducing an $(\ell+1)^{st}$ column that
duplicates the $(k-i)^{th}$ column.  This last duplicated column is
$$
\begin{array}{lccrll}
\ \ \ \ (h_{\mu_1+k-i-1},\ldots,h_{\mu_k-i},
   &h_{i-k + (k-i)},&h_{\mu_{k+1}-i-1},&\ldots, &h_{\mu_{\ell}+k-i-\ell}&)^T \\
= (h_{\mu_1+k-i-1},\ldots,h_{\mu_k-i},
   &1,            &0,              &\ldots,&0&)^T.
\end{array}
$$
Here we have used the facts that $h_0=1$ and that $h_{\mu_m+k-i-m}=0$ 
for $m \geq k+1$ because $\mu_m \leq \mu_{k+1} \leq i$ implies 
$\mu_m+k-i-m=(\mu_m-i)+(k-m)<0$.
One then checks that cofactor expanding the (zero) determinant of this 
$(\ell+1) \times (\ell+1)$ matrix along this duplicated column gives the asserted identity.
\end{proof}

\subsection{Proof of Theorem~\ref{J-generators}}
\label{minconjsection}
  The proof of the second statement will follow from the first, via the
well-known ring involution $\omega$ on symmmetric functions defined by
$$
\begin{aligned}
\Lambda &\overset{\omega}{\rightarrow} \Lambda \\
s_\lambda &\longmapsto s_{\lambda'}
\end{aligned}
$$
where $\lambda'$ is the conjugate partition to $\lambda$.  This means
that $\omega$ sends the ideal $I_{r,n-r}$ to the ideal $I_{n-r,r}$.
Hence the set \eqref{generating-set-two} generates $J_v$ within
$\Lambda/I_{r,n-r}$, where $v$ corresponds to an $i \times j$
rectangle, if and only if the set \eqref{generating-set-one} generates
the ideal $J_{v^\prime}$ within $\Lambda/I_{n-r,r}$, where $v^\prime$
corresponds to a $j\times i$ rectangle.

  The proof for \eqref{generating-set-one} is by induction on the
degree $d$, which is the number of boxes in our partition.
Our inductive hypothesis is that the portion of $J_v$ of degree at
most $d$ is generated by those elements of \eqref{generating-set-one}
of degree at most $d$, or equivalently, that all elements of
\eqref{step0-generators} of degree at most $d$ are writable in terms
of elements of \eqref{generating-set-one} of degree at most $d$.  The
base case, $d=ij$, is clear, since $s_{i^j}$ is the only element of
degree $ij$ in both sets.

Our proof for the inductive case proceeds in three steps.  Start with
the generating set for $J_v$ given in \eqref{step0-generators}.  We
wish to show that, modulo $I_{r,n-r}$, all such $s_\mu$ with $|\mu|=d$
lie in the ideal generated by those $s_\mu$ with $|\mu|<d$ and those
\begin{enumerate}
\item[{\sf Step 1.}]
with $\mu$ in the interval $[i^j,(n-r)^j]$, and then furthermore
\item[{\sf Step 2.}]
with $\mu$ in the interval $[i^j,(i+a)^j$], and then finally
\item[{\sf Step 3.}]
with $\mu$ in the interval $[i^j,((i+a)^b,i^{j-b})]$.
\end{enumerate}

\vskip.1in
\noindent
{\sf Step 1.}
  We will use induction on a certain partial order on partitions which depends on the index $j$. 
For a partition $\lambda$, define 
$$
\hat{\lambda}:=(\lambda_{j+1},\lambda_{j+2},\ldots),
$$
so that the Young diagram of $\hat{\lambda}$ consists of rows
$j+1,j+2,\ldots$ from the Young diagram of $\lambda$.
Then partially order the partitions containing $i^j$ by
decreeing $\lambda \prec_j \mu$ if either $|\hat{\lambda}| <|\hat{\mu}|$,
or if $|\hat{\lambda}| = |\hat{\mu}|$ but 
$\hat{\lambda}<\hat{\mu}$ in the {\it dominance order}, meaning that 
$$
\lambda_{j+1} + \lambda_{j+2} + \cdots \lambda_k \leq \mu_{j+1} + \mu_{j+2} + \cdots \mu_k
$$
for each $k \geq j$.

Now if $\mu$ does not already lie in the interval $[i^j,(n-r)^j]$, so
that $\ell(\mu) = k+j > j$, let $\nu$ be the partition obtained from
$\mu$ by removing $1$ from its last $k$ nonempty parts
$\mu_{j+1},\mu_{j+2},\ldots,\mu_{j+k}$.  Then by the Pieri rule
\eqref{Pieri-rule}, $e_ks_\nu=s_\mu+\sum_{\lambda} s_\lambda$, where
$\lambda$ runs through partitions other than $\mu$ obtained from $\nu$
by adding a vertical strip of size $k$.  One can check that any such
$\lambda$ satisfies $\lambda \prec_j \mu$: either the vertical strip
contains some boxes in the first $j$ rows, so that $|\hat{\lambda}|
<|\hat{\mu}|$, or if not, the location of the vertical strip forces
$\hat{\lambda}<\hat{\mu}$ in dominance.  Also, $k\geq1$, so
$|\nu|<|\mu|$.  Consequently, by induction on the order $\prec$, one
has an expression for $s_\mu$ showing that it is in the ideal
generated by $s_\lambda$ where either $|\lambda|<|\mu|$ or $\lambda$ is in
the interval $[i^j,(n-r)^j]$.

\vskip.1in
\noindent
{\sf Step 2.}  We will again use induction, this time on reverse dominance
  order.  We wish to write $s_\mu$ where $\mu$ is in the interval
  $[i^j,(n-r)^j]$ in terms of $s_\lambda$ where $|\lambda|<|\mu|$ or
  $\lambda$ lies in the interval $[i^j,(i+a)^j$].  Recall that
  $a=\min(n-r-i,r-j)$, and if $a=n-r-i$, then $n-r=i+a$ so there is
  nothing to do after Step 1.  Thus we may assume $a=r-j$.

 If $\mu$ does not already lie in the interval $[i^j,(i+a)^j]$, so
that $\mu_1 > i+a$, let $k:=\mu_1-i > a$, and let $\nu$ be the
partition obtained from $\mu$ by removing $1$ box from each of the
last $k$ nonempty columns in the Young diagram of $\mu$.  Note that
$\ell(\nu) \geq \ell(\mu)$ since $k<\mu_1$, and $\ell(\mu) \geq j$
since $i^j \subset \mu$.  Hence $(\nu,1^k)$ has length at least $j+k >
j+a = r$, so that the partition $(\nu,1^k) \not\subseteq (n-r)^r$, and
hence Lemma~\ref{Woo-lemma} tells us that
$$
\sum_{\ell=0}^k (-1)^\ell e_{k-\ell} \sum_\lambda s_\lambda \equiv 0 \mod I_{r,n-r}
$$
where in the sum $\lambda$ runs through partitions having no more than $j$ parts obtained
from $\nu$ by adding a horizontal strip of length $\ell$.  

  We claim that almost all of the terms in this sum, excepting the
single term with $\ell=k$ and $\lambda=\mu$, will have $|\lambda| <
|\mu|$ or $\lambda>\mu$.  If $\ell < k$, then $|\lambda|<|\mu|$.  If
$\ell = k$, note the horizontal strip $\lambda/\nu$ cannot have any
boxes in the first $i$ columns as those already have length $j$ in
$\nu$.  Therefore, the location of the horizontal strip forces
$\lambda > \mu$ in dominance.  Consequently, by induction, one has an
expression for $s_\mu$ showing that it is in the ideal generated by
$s_\lambda$ where $|\lambda|<|\mu|$ or $\lambda$ is in the interval
$[i^j,(i+a)^j]$.

\vskip.1in
\noindent
{\sf Step 3.}  Now we show that, if $\mu$ fits inside $(i+a)^j$ but
not inside $((i+a)^b,i^{j-b})$, then $s_\mu$ can always be written as
a sum of terms of the form $h_r s_\lambda$ for $r>0$ and $\lambda$
containing $i^j$.  Since $r>0$, $|\lambda|<|\mu|$, this suffices to
finish the proof.

Recall that $b=\min(i,j)$, and if $b=j$ then there is nothing to do
after Step 2. Thus we may assume $b=i < j$.

Let $k$ be the number of parts of $\mu$ which are strictly larger than
$i$, so that $k$ is the size of the $(i+1)^{st}$ column in the Young
diagram of $\mu$.  Since $\mu$ does not fit inside
$((i+a)^i,i^{j-i})$, it must be that $k > i$, and we are in the
situation of Lemma~\ref{Woo-2nd-lemma}.  Hence
$$
s_\mu = \sum_{m=1}^k (-1)^{k-m} h_{\mu_m+k-i-m}s_{\mu^{(m)}}
$$
where 
$$
\mu^{(m)}:=(\mu_1,\mu_2,\ldots,\mu_{m-1},\mu_{m+1}-1,\mu_{m+2}-1,
               \ldots,\mu_k-1,i,\mu_{k+1},\mu_{k+2},\ldots,\mu_{\ell})
$$
and $\ell=\ell(\mu)$.
Note that, since $\mu$ contains $i^j$, and hence $k \leq j$, each
$\mu^{(m)}$ also contains $i^j$.  Also note that each factor
$h_{\mu_m+k-i-m}$ has positive degree: $m \leq k$ implies $\mu_m \geq
\mu_k > i$, and hence $\mu_m+k-i-m = (\mu_m-i)+(k-m)> 0$. \qed

\section{A question}
\label{questions-section}


%
%
\begin{question}
Can one find a {\it minimal} generating set for the ideal $I_w$ in type $A_{n-1}$?

Can this at least be done for some of the 
recently-studied subclasses \cite{GasharovReiner, HultmanLinussonShareshianSjostrand, OhPostnikovYoo}
where $I_w$ can be generated by $n^2$ elements, such as
\begin{enumerate}
\item[$\bullet$]
when $X_w$  is {\it defined by inclusions}, which occurs when $w$ avoids the
patterns 
$$\{4231, 35142, 42513, 351624\},$$
\item[$\bullet$]
or more restrictively, when  $X_w$ is {\it smooth}, which occurs when $w$ avoids the
patterns 
$$\{3412,4231\}?$$
\end{enumerate}
\end{question}
\noindent
It was mentioned in the introduction that for
a special subclass of smooth Schubert varieties $X_w$ originally considered by Ding \cite{Ding1, Ding2},
there is a known minimal (in fact, complete intersection) presentation for $H^*(X_w,\ZZ)$ with $n$ 
relations that was exploited in \cite{DevelinMartinReiner}.  Short presentations would be
useful to extend that work further.

\section*{Acknowledgements}
The authors thank Nathan Reading for helpful comments and corrections.
We also thank the anonymous referee for pointing out the reference
\cite{ALP} to us, and for other helpful remarks.
VR is supported by NSF grant DMS-0601010. AY is supported by 
NSF grants DMS-0601010 and DMS-0901331. AW is
supported by NSF VIGRE grant DMS-0135345. This work was partially
completed while AY was a visitor at the Fields Institute in Toronto, and
was facilitated by a printer graciously provided by Lawrence Gray through the
University of Minnesota.

\end{document}